\newtheorem{theorem}{Theorem}
\newtheorem{definition}{Definition}
\newtheorem{proposition}{Proposition}
\newtheorem{corollary}{Corollary}
\def\neweq#1{\begin{equation}\label{#1}}
\def\endeq{\end{equation}}
\def\ep{\varepsilon}
\def\phi{\varphi}
\def\RR{{\mathbb R} }
\def\NN{{\mathbb N} }
\def\di{\displaystyle}
\def\ri{\rightarrow}
\date{}
\title{\sc {On a $p(\cdot)$-biharmonic problem with no-flux boundary condition}}
\author{\sc Maria-Magdalena Boureanu$\,^a$
, Vicen\c{t}iu R\u{a}dulescu$\,^{b}$ and Du\v{s}an Repov\v{s}$\,^{c}$\\
\small $^a\,$Department of Applied Mathematics, University of Craiova,
200585 Craiova,
Romania\\ \small $^b\,$Department of Mathematics, University of Craiova,
200585 Craiova,
Romania\\
\small $^c\,$Faculty of Mathematics and
Physics, University of Ljubljana,\\ \small Jadranska  19,  P. O. Box 2964, 1001 Ljubljana, Slovenia\\
\small E-mail: {\tt  mmboureanu@yahoo.com,}
  {\tt vicentiu.radulescu@math.cnrs.fr,} {\tt dusan.repovs@guest.arnes.si}}
\begin{document}
\baselineskip16pt \maketitle

\noindent{\small{\sc Abstract}. The study of fourth order partial differential equations has flourished in the last years, however, a $p(\cdot)$-biharmonic problem with no-flux boundary condition has never been considered before, not even for constant $p$. This is an important step further, since surfaces that are impermeable to some contaminants are appearing quite often in nature, hence the significance of such boundary condition. By relying on several variational arguments, we obtain the existence and the multiplicity of weak solutions to our problem. We point out that, although we use a mountain pass type theorem in order to establish the multiplicity result, we do not impose an Ambrosetti-Rabinowitz type condition, nor a symmetry condition, on our nonlinearity $f$.\\

\small{\bf 2010 Mathematics
Subject Classification:}  46E35, 35J60, 35J30, 35J35, 35J40, 35D30. \\
\small{\bf Key words:} variable exponent, new variable exponent subspace, $p(\cdot)$-biharmonic operator, nonlinear elliptic problems,  weak solutions, existence, multiplicity.}

\section{Introduction}

Fourth order PDEs have various applications, to micro-electro-mechanical systems, phase field models of
multiphase systems, thin film theory, thin plate theory, surface diffusion on solids,
interface dynamics, flow in Hele-Shaw cells,  see for example \cite{Danet, Fe, M.T.G}. Therefore many authors focused on the study of such problems with constant exponents, like Molica Bisci and Repov\v{s} \cite{Mo-Re}, Candito and Molica Bisci \cite{Ca-Mo}, or Liu and Squassina \cite{Li-Sq} etc. At the same time, many applications are generated by the elliptic problems with variable exponents, which have a large range of applications, due to electrorheological fluids
\cite{repovs2, ana1, hal, Sob6, rajruz, repovs1, Ru, Sob5, ana2}, thermorheological fluids \cite{AnRo}, elastic materials \cite{Zhikov1, ANS}, image restoration \cite{chen}, mathematical biology \cite{frag}, dielectric breakdown and electrical
resistivity \cite{BM}, polycrystal plasticity \cite{BMP} and sandpile growth \cite{BMPLR}. At the interplay of these two research directions, a natural interest goes to the $p(\cdot)$-biharmonic problems. This trend is quite fresh, starting probably in 2009, with the papers \cite{AElA1, Amr}, where the authors considered problems with the Navier boundary condition
\begin{equation}\label{Navier}
 u=\Delta u=0  \quad \text{on } \partial\Omega.
\end{equation}
The line of investigation was continued by several authors, see \cite{afrouzi, AElAO, AElA2, aou, Lin, kong1, kong2, YL}. Notice that all these studies focus on problems with the Navier boundary condition \eqref{Navier} and only one of them, \cite{aou}, also considers the Neumann type boundary condition
$$\frac{\partial u}{\partial \nu}=\frac{\partial}{\partial \nu}(|\Delta u|^{p(x)-2}\Delta u)=0\quad \mbox{on } \partial\Omega.$$
But if we think at the applicability to real-life situations, when the surfaces are impermeable to some contaminants, we are drawn to the no-flux boundary problems. Hence, inspired by the previous studies \cite{CPAA_mc, NARWA}, where second order problems with no-flux boundary conditions are treated in the framework of the variable exponent spaces, we propose the following problem.

\begin{equation}\label{newPr1}
\left\{\begin{array}{lll}
\Delta(|\Delta u|^{p(x)-2}\Delta u )+a(x)|u|^{p(x)-2}u=\lambda\, f(x,u)
&\mbox{for}& x\in\Omega,\\
u\equiv \mbox{constant},\,\, \Delta u=0 &\mbox{for}& x\in\partial\Omega,\\
\displaystyle\int_{\partial\Omega}\frac{\partial}{\partial \nu}(|\Delta u|^{p(x)-2}\Delta u)\; dS=0,
\end{array}\right.
\end{equation}
where $\Omega\subset\RR^N$ ($N\geq 2$) is a bounded domain with
sufficiently smooth boundary, $\lambda >0$, and the exponent $p$ is log-H\"{o}lder continuous, that is,  for each $i\in\{1,\dots,N\}$ there exists $\bar{c}>0$ such that
\[ |p( x)-p( y)|\leq \frac{\bar{c}}{-\operatorname {log}|x- y|}\,
\ \mbox {\ \ for all\ }\ x,\, y\in\Omega,\ \ 0<| x- y|\leq
\frac{1}{2},\] and $$1<\di{\rm ess}\inf_{x\in\Omega}\di p(x)\leq \di{\rm ess}\sup_{x\in\Omega}\di p(x)<\infty \mbox {\ \ for all\ }\ x\in\Omega.$$
For simplicity,  we denote
$$h^-=\di{\rm ess}\inf_{x\in\Omega}\di h(x)\qquad\mbox{and }h^+=\di{\rm ess}\sup_{x\in\Omega}\di h(x).$$
We will work under the following hypotheses:
 \begin{description}
 \item[(H1)] $a\in L^\infty(\Omega)$ and there exists $a_0>0$ such that $a(x)\geq a_0$ for all $x\in\Omega$;

 \item[(H2)] $f:\Omega\times \mathbb{R} \rightarrow\mathbb{R}$ is a  Carath\'eodory function and there exist $t_0>0$ and a ball $B$ with $\overline{B}\subset\Omega$ such that $$\int_{B} F(x, t_0)\,dx>0,$$ where $F$ represents the antiderivative of $f$, that is, $F(x,t)=\int_{0}^{t}f(x,s)\,ds;$

 \item[(H3)] $\di\lim_{|t|\rightarrow\infty}\di\frac{f(x,t)}{|t|^{p(x)-1}}=0$ uniformly with respect to $x\in\Omega$;

 \item[(H4)] $\di\lim_{|t|\rightarrow 0}\di\frac{f(x,t)}{|t|^{p(x)-1}}=0$ uniformly with respect to $x\in\Omega$.
\end{description}
Note that all the necessary details regarding the definition and the properties of the variable exponent spaces involved in the investigation of our problem will be provided in the next section. It is worth mentioning though, that, since the class of problems represented by \eqref{newPr1} was not introduced before, not even for the constant case, we will need to introduce a new space on which is more appropriate to search for weak solutions to \eqref{newPr1}. Depending on the values taken by $\lambda$, we establish an existence and a multiplicity result. For the existence result we rely on a classical theorem from the field of calculus of variations, sometimes referred to as a Weierstrass-type theorem. For the second solution, we make use of a mountain pass type theorem, without imposing the usual Ambrosetti-Rabinowitz growth condition, that is, there exist $\theta>p^+$ and $l>0$ such that
$$0<\theta F(x,t)\leq f(x,t)t\quad\mbox{for all }|t|>l\mbox{ and a.e. }x\in\!{\Omega}.$$
The celebrated mountain pass theorem of Ambrosetti and Rabinowitz has provided lots of applications during the years and represents the key ingredient to the weak solvability for numerous problems. However, for the multiplicity of solutions, all the adaptations of the mountain pass theorem are relying on additional symmetry conditions on the nonlinearity $f$:
\begin{equation}\label{eq:symm}
f(x,-t)=-f(x,t)\quad\mbox{for a.e. }x\in\Omega\mbox{ and all }t\in\RR
\end{equation}
with the help of which we can get the existence of an unbounded sequence of weak solutions. This was the case for the fourth order PDEs with variable exponent treated by \cite{afrouzi, AElA2, AElA1, Amr, Lin}. Other multiplicity results, which do not impose condition \eqref{eq:symm} on the nonlinearity, were provided due to various three critical points theorems of Ricceri type, see \cite{AElAO, aou}. Our problem is the first variable exponent problem of fourth order for which the multiplicity of solutions is obtained by applying a different strategy. For second order problems with variable exponents for which the same strategy is applied we refer to \cite{CPAA_mc, NARWA, hosim}.

\section{Some preliminaries}
We introduce some notation that will clarify what follows. Thus, when we refer to a Banach space $X$, we denote by $X^\star$ its dual and by $\langle\cdot,\cdot\rangle$ the duality pairing between $X^\star$ and $X$. By $|\cdot|$ we denote the absolute value of a number, or the Euclidean norm when it is defined on $\RR^N$ ($N\geq 2$), respectively the Lebesgue measure, when it is applied to a set.

\smallskip

We recall the definitions of the variable exponent Lebesgue and Sobolev spaces and some of their basic properties, but much more details can be found in the comprehensive works \cite{book_p_x, CUF, RadRep}. As stated from the beginning, everywhere below we consider $p$ to be log-H\"{o}lder continuous with $1<p^-\leq p^+<\infty$.

The Lebesgue space with variable exponent is defined by
$$L^{p(\cdot)}({\Omega})=\{u\,:\quad u \mbox{ is a
 measurable real--valued function such that }
\int_{\Omega}|u(x)|^{p(x)}\;dx<\infty\}.$$ This space is equipped with the
 {Luxemburg norm},
 \begin{equation*}\label{elnorm}
\|u\|_{L^{p(\cdot)}({\Omega})}=\inf\left\{\mu>0\;:\quad\;\int_{\Omega}\left|
\frac{u(x)}{\mu}\right|^{p(x)}\;dx\leq 1\right\},
\end{equation*}
  and it is a separable and reflexive Banach
space, see \cite[Theorem 2.5, Corollary 2.7]{KR}. Also, we have the following continuous embedding result.

\begin{theorem}(\cite[Theorem 2.8]{KR})\label{teor Leb inclusion} If $0 <|{\Omega}|<\infty$ and
$p_1$, $p_2\in C(\overline\Omega;\RR)$, $1\leq
p_i^-\leq p_i^+<\infty$ ($i=1,2$), are such that $p_1 \leq p_2$
in ${\Omega}$, then the embedding $L^{p_2(\cdot)}({\Omega})\hookrightarrow L^{p_1(\cdot)}({\Omega})$ is continuous.
\end{theorem}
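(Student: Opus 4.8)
The plan is to establish the embedding directly from the definition of the Luxemburg norm, by means of a single elementary pointwise inequality combined with the standard comparison between the modular and the norm. First I would dispose of the trivial case $u\equiv 0$, and then assume $u\neq 0$, setting $\mu=\|u\|_{L^{p_2(\cdot)}(\Omega)}>0$. The goal is to produce a constant $C$, independent of $u$, with $\|u\|_{L^{p_1(\cdot)}(\Omega)}\leq C\mu$, which simultaneously yields $u\in L^{p_1(\cdot)}(\Omega)$ and the boundedness (hence continuity) of the inclusion map.

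The elementary observation is that, for every $t\geq 0$ and a.e. $x\in\Omega$, one has $t^{p_1(x)}\leq 1+t^{p_2(x)}$: indeed, when $0\leq t\leq 1$ the left-hand side is at most $1$, while when $t>1$ the inequality $p_1(x)\leq p_2(x)$ gives $t^{p_1(x)}\leq t^{p_2(x)}$. Applying this with $t=|u(x)|/\mu$ and integrating over $\Omega$ yields
$$\int_\Omega\left|\frac{u(x)}{\mu}\right|^{p_1(x)}\,dx\leq |\Omega|+\int_\Omega\left|\frac{u(x)}{\mu}\right|^{p_2(x)}\,dx.$$
Because $\mu$ is exactly the Luxemburg norm of $u$ in $L^{p_2(\cdot)}(\Omega)$ and $p_2^+<\infty$, the modular $\int_\Omega|u/\mu|^{p_2(x)}\,dx$ does not exceed $1$, so the right-hand side is bounded by the finite constant $|\Omega|+1$, depending only on $\Omega$.

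To convert this modular bound into a norm estimate, I would invoke the familiar relation between the modular and the Luxemburg norm. Writing $v=u/\mu$, if $\|v\|_{L^{p_1(\cdot)}(\Omega)}>1$ then $\|v\|_{L^{p_1(\cdot)}(\Omega)}^{p_1^-}\leq\int_\Omega|v|^{p_1(x)}\,dx\leq|\Omega|+1$, whereas if $\|v\|_{L^{p_1(\cdot)}(\Omega)}\leq 1$ there is nothing further to estimate. In either case $\|v\|_{L^{p_1(\cdot)}(\Omega)}\leq\max\{1,(|\Omega|+1)^{1/p_1^-}\}$, and multiplying through by $\mu$ gives
$$\|u\|_{L^{p_1(\cdot)}(\Omega)}\leq C\,\|u\|_{L^{p_2(\cdot)}(\Omega)},\qquad C=\max\{1,(|\Omega|+1)^{1/p_1^-}\},$$
which is precisely the asserted continuity.

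The computation is routine; the one point deserving care—and which I regard as the main technical obstacle—is the justification that the modular of $u/\mu$ in $L^{p_2(\cdot)}(\Omega)$ is at most $1$. This uses the finiteness of $p_2^+$ to guarantee, via dominated convergence, that $\mu\mapsto\int_\Omega|u/\mu|^{p_2(x)}\,dx$ is continuous and non-increasing with limit $0$ at infinity, so that the infimum defining the norm is actually attained. Alternatively, one may run the whole estimate for every $\mu>\|u\|_{L^{p_2(\cdot)}(\Omega)}$, where the modular is $\leq 1$ by definition of the infimum, and then let $\mu$ decrease to the norm. It is worth noting that the hypotheses enter only through the finiteness of $|\Omega|$ and the positivity of $p_1^-$ (ensured by $p_1^-\geq 1$), so the estimate in fact requires only $p_1\leq p_2$ a.e.\ together with $1\leq p_1^-$ and $p_2^+<\infty$.
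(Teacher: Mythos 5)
Your proof is correct. The paper offers no proof of this statement at all---it is quoted verbatim from Kov\'a\v cik and R\'akosn\'{\i}k \cite[Theorem 2.8]{KR}---and your argument (the pointwise inequality $t^{p_1(x)}\leq 1+t^{p_2(x)}$, the bound $\rho_{p_2(\cdot)}(u/\|u\|_{L^{p_2(\cdot)}(\Omega)})\leq 1$, and the modular--norm comparison \eqref{L4}) is precisely the standard argument behind the cited result, with the one delicate point (attainment of the infimum in the Luxemburg norm, or the workaround of letting $\mu$ decrease to the norm) correctly identified and handled.
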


The {$p(\cdot)$-modular} of the
$L^{p(\cdot)}({\Omega})$ space is represented by $\rho_{p(\cdot)}:L^{p(\cdot)}({\Omega})\rightarrow\RR$,
$$\rho_{p(\cdot)}(u)=\int_{\Omega}|u(x)|^{p(x)}\;dx,$$
  and we have some useful properties connecting this application to the Luxemburg norm, see for example \cite[Theorem 1.3, Theorem 1.4]{fanz}. If $u\in L^{p(\cdot)}({\Omega})$,
 then
\begin{equation}\label{L40}
\|u\|_{L^{p(\cdot)}({\Omega})}<1\;(=1;\,>1)\;\;\;\Leftrightarrow\;\;\;\rho_{p(\cdot)}(u) <1\;(=1;\,>1);
\end{equation}
\begin{equation}\label{L4}
\|u\|_{L^{p(\cdot)}({\Omega})}>1\;\;\;\Rightarrow\;\;\;\|u\|_{L^{p(\cdot)}({\Omega})}^{p^-}\leq\rho_{p(\cdot)}(u)
\leq\|u\|_{L^{p(\cdot)}({\Omega})}^{p^+};
\end{equation}
\begin{equation}\label{L5}
\|u\|_{L^{p(\cdot)}({\Omega})}<1\;\;\;\Rightarrow\;\;\;\|u\|_{L^{p(\cdot)}({\Omega})}^{p^+}\leq \rho_{p(\cdot)}(u)\leq\|u\|_{L^{p(\cdot)}({\Omega})}^{p^-};
\end{equation}
\begin{equation}\label{L06}
\|u\|_{L^{p(\cdot)}({\Omega})}\rightarrow
0\;(\rightarrow\infty)\;\;\;\Leftrightarrow\;\;\;\rho_{p(\cdot)} (u)\rightarrow 0\;(\rightarrow\infty).
\end{equation}
If, in addition, $(u_n)_n\subset L^{p(\cdot)}({\Omega})$, then
\begin{equation*}\label{L6}
\lim_{n\ri\infty}\|u_n-u\|_{L^{p(\cdot)}({\Omega})}=
0\;\;\;\Leftrightarrow\;\;\;\lim_{n\ri\infty}\rho_{p(\cdot)} (u_n-u)= 0\;\;\;\Leftrightarrow
\end{equation*}
\begin{equation}\label{L07}
\Leftrightarrow \;\;\;(u_n)_n \mbox{ converges to } u \mbox{ in measure and }\lim_{n\ri\infty}\rho_{p(\cdot)}
(u_n)=\rho_{p(\cdot)}(u).
\end{equation}

In addition, we benefit from a H\"older type
inequality:
\begin{equation}\label{Hol}
\left|\int_{\Omega} u(x)v(x)\;dx\right|
\leq2\,\|u\|_{L^{p(\cdot)}({\Omega})}\|v\|_{L^{p'(\cdot)}({\Omega})},
\end{equation}
for all $u\in L^{p(\cdot)}({\Omega})$ and $v\in
L^{p'(\cdot)}({\Omega})$ (see \cite[Theorem 2.1]{KR}), where we
denoted by $L^{p'(\cdot)}({\Omega})$ the dual of
$L^{p(\cdot)}({\Omega})$, obtained by conjugating the exponent
pointwise, that is,  $1/p(x)+1/p'(x)=1$, see \cite[Corollary
2.7]{KR}.

\smallskip

Passing to the definition of the Sobolev space with variable exponent, $W^{k,p(\cdot)}({\Omega})$, we set
\[
W^{k,p(\cdot)}(\Omega)=\{ u\in L^{p(\cdot)}(\Omega):
D^{\alpha}u\in L^{p(\cdot)}(\Omega), |\alpha|\leq k\},
\]
where $D^{\alpha}u=\frac{\partial^{|\alpha|}}{\partial x_{1}^{\alpha_1}
\partial x_{2}^{\alpha_2}\dots \partial x_{N}^{\alpha_N}}u$, with
$\alpha=(\alpha_1,\dots ,\alpha_N)$ is a multi-index and
$|\alpha|=\sum_{i=1}^N \alpha_i$. The space $W^{k,p(\cdot)}(\Omega)$ endowed
with the norm
\[
\|u\|_{W^{k,p(x)}(\Omega)}=\sum_{|\alpha|\leq k}\|D^{\alpha}u\|_{L^{p(\cdot)}(\Omega)},
\]
is a separable and reflexive Banach space too, see \cite[Theorem 3.1]{KR}. \\

The log-H\"{o}lder continuity of the exponent $p$ plays a decisive role in the following density results.
\begin{theorem}\label{th dens} (see \cite[Theorem 3.7]{DD} and \cite[Section 6.5.3]{CUF})
Assume that
$\Omega\subset\mathbb R^N$ ($N\geq 2$) is a bounded domain with
Lipschitz boundary and $p$ is log-H\"{o}lder continuous with $1<p^-\leq p^+<\infty$. Then $C^\infty(\overline{\Omega})$ is dense in $W^{k,p(\cdot)}({\Omega})$.
\end{theorem}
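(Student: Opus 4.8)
The final statement is Theorem 2 (\texttt{th dens}), the density of $C^\infty(\overline{\Omega})$ in $W^{k,p(\cdot)}(\Omega)$. This is a known result attributed to Diening et al. and the Cruz-Uribe--Fiorenza book, so my plan is to outline the standard mollification-plus-extension argument that underlies such density theorems in the variable exponent setting, emphasizing where log-H\"older continuity enters.

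The plan is to reduce the problem to a local statement and then use mollification. First I would cover $\overline{\Omega}$ by finitely many balls using compactness, employing the Lipschitz character of $\partial\Omega$ to straighten the boundary locally and to reflect or translate the domain so that, near each boundary patch, the function can be extended slightly outside $\Omega$. Using a partition of unity subordinate to this cover, I would write $u=\sum_i \varphi_i u$ and reduce to approximating each piece $\varphi_i u$, whose support lies in one coordinate chart. On the interior pieces, ordinary mollification $u_\varepsilon=u*\rho_\varepsilon$ produces $C^\infty$ functions, and the core analytic task is to show $u_\varepsilon\to u$ in the $W^{k,p(\cdot)}$ norm. On the boundary pieces, one first translates the localized function in the inward normal direction by an amount proportional to $\varepsilon$ so that its mollification is well defined up to the boundary, then mollifies.

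The technical heart, and the step I expect to be the main obstacle, is establishing that the mollification operator is uniformly bounded on $L^{p(\cdot)}(\Omega)$ and that $u_\varepsilon\to u$ there, since in the variable exponent setting convolution does \emph{not} commute nicely with the norm the way it does for constant $p$. This is precisely where log-H\"older continuity is indispensable: it guarantees that the mollified exponent does not oscillate too wildly on the scale $\varepsilon$ of the mollifier, yielding the key estimate that the maximal operator (or equivalently the averaging operator $u\mapsto u_\varepsilon$) is bounded on $L^{p(\cdot)}(\Omega)$ with a constant independent of $\varepsilon$. Concretely, one uses the log-H\"older bound to control $\rho_{p(\cdot)}(u_\varepsilon)$ in terms of $\rho_{p(\cdot)}(u)$ up to additive and multiplicative constants, and then invokes the modular--norm equivalences \eqref{L40}--\eqref{L07} to pass to norm convergence. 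Once boundedness is in hand, convergence $u_\varepsilon\to u$ on the dense subset of continuous functions combined with the uniform bound gives convergence on all of $L^{p(\cdot)}(\Omega)$ by a standard density argument.

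Finally, I would assemble the pieces: since $D^\alpha(u_\varepsilon)=(D^\alpha u)_\varepsilon=(D^\alpha u)*\rho_\varepsilon$ for $|\alpha|\le k$, applying the $L^{p(\cdot)}$ convergence to each derivative $D^\alpha u\in L^{p(\cdot)}(\Omega)$ simultaneously yields $\|u_\varepsilon-u\|_{W^{k,p(\cdot)}}\to 0$, and summing the finitely many localized approximations produces a global $C^\infty(\overline{\Omega})$ approximant. Rather than reproduce this entire machinery, since the result is already available in the literature, I would state that the conclusion follows directly from \cite[Theorem 3.7]{DD} together with \cite[Section 6.5.3]{CUF}, where the Lipschitz boundary and log-H\"older continuity hypotheses are exactly the assumptions under which the mollification scheme above is carried out rigorously.
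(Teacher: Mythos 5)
The paper gives no proof of this theorem at all---it is quoted directly from \cite[Theorem 3.7]{DD} and \cite[Section 6.5.3]{CUF}---and your proposal ultimately defers to the same references, so the two approaches coincide. Your sketch of the underlying localization-plus-mollification argument, and in particular your identification of log-H\"older continuity as the hypothesis guaranteeing uniform boundedness of the averaging operator on $L^{p(\cdot)}(\Omega)$, is an accurate account of how those references establish the result.
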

Notice that the functions from $C^{0,\mu}(\Omega)$ are log-H\"{o}lder continuous. Also, it is important to mention that although the log-H\"{o}lder continuity of the exponent is a sufficient condition for the above density result, it is not always necessary, see \cite{CUF, Zhikov_dens}.

Moreover, the following embedding theorem takes place.
\begin{theorem}(see \cite[Theorem 2.3]{fanz} and \cite[Section 6]{CUF})\label{th emb}
Let us consider $q\in C(\overline{\Omega};\RR)$ such that $1<q^-\leq q^+<\infty$ and $q(x)\leq p^{*}_{k}(x)$ for
 all $x\in \overline{\Omega}$, where
 \[
p_{k}^{*}(x)=\begin{cases}
\frac{Np(x)}{N-kp(x)}     & \text{if } kp(x)<N,\\
+\infty       & \text{if } kp(x)\geq N
\end{cases}
\]
for any $x\in \overline{\Omega}$, $k\geq1$.\\
Then there is a continuous embedding
\begin{align*}
W^{k,p(\cdot)}(\Omega)\hookrightarrow L^{q(\cdot)}(\Omega).
\end{align*}
If we replace $\leq$ with $<$ the embedding is compact.
\end{theorem}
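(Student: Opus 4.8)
The plan is to prove the continuous embedding first, reduce the higher-order case to the first-order one by induction on $k$, and then upgrade to compactness under the strict inequality. The analytic core is the first-order \emph{Sobolev inequality with variable exponent}: for $u\in C^\infty(\overline\Omega)$ one has $\|u\|_{L^{p^*_1(\cdot)}(\Omega)}\le C\|u\|_{W^{1,p(\cdot)}(\Omega)}$, where $p^*_1(x)=Np(x)/(N-p(x))$ on the region $\{p(x)<N\}$. To obtain it I would extend $u$ to a compactly supported function on $\RR^N$, invoke the pointwise bound $|u(x)|\le c_N\,I_1(|\nabla u|)(x)$ in terms of the Riesz potential $I_1$ of order one, and then apply the boundedness of $I_1$ from $L^{p(\cdot)}$ into $L^{p^*_1(\cdot)}$. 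The density of $C^\infty(\overline\Omega)$ granted by \th{th dens} then propagates the inequality to all of $W^{1,p(\cdot)}(\Omega)$. This Riesz-potential estimate is exactly where the log-H\"older continuity of $p$ enters decisively: it forces $p$ to behave like a constant at small scales, which is what makes $I_1$ respect the variable Sobolev conjugate.

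For the general order I would run an induction whose hypothesis is the critical embedding $W^{j,p(\cdot)}(\Omega)\hookrightarrow L^{p^*_j(\cdot)}(\Omega)$. Given $u\in W^{k,p(\cdot)}(\Omega)$, the inductive hypothesis at level $k-1$ places $u$ and all its first-order derivatives in $L^{r(\cdot)}(\Omega)$ with $r:=p^*_{k-1}$, so $u\in W^{1,r(\cdot)}(\Omega)$. Since $r$ is a locally Lipschitz function of $p$ on the subcritical regime, it inherits the log-H\"older continuity of $p$, so the base case applies with $r$ in place of $p$ and yields $u\in L^{r^*_1(\cdot)}(\Omega)$. A direct computation gives $r^*_1=(p^*_{k-1})^*_1=Np/(N-kp)=p^*_k$ wherever $kp(x)<N$, so the exponents iterate correctly. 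The embedding into $L^{q(\cdot)}(\Omega)$ for any $q\le p^*_k$ is then immediate from \th{teor Leb inclusion}. The fiddly part of this step is the bookkeeping across the regimes $jp(x)<N$, $jp(x)=N$ and $jp(x)>N$ for intermediate orders $j$, where $p^*_j$ degenerates to $+\infty$ and the target becomes an $L^\infty$- or H\"older-type space; there the iteration must be carried out separately, using Morrey-type estimates for the corresponding Riesz potential.

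For the compactness assertion under $q(x)<p^*_k(x)$, let $(u_n)_n$ be bounded in $W^{k,p(\cdot)}(\Omega)$; by reflexivity assume $u_n\weak u$. Since $p^-\le p(x)$, \th{teor Leb inclusion} gives a continuous embedding $W^{k,p(\cdot)}(\Omega)\hookrightarrow W^{k,p^-}(\Omega)$, and the classical Rellich--Kondrachov theorem yields $u_n\ri u$ strongly in $L^{p^-}(\Omega)$, hence $u_n\ri u$ a.e. along a subsequence. On the other hand, the continuous critical embedding just established keeps $(u_n-u)_n$ bounded in $L^{p^*_k(\cdot)}(\Omega)$. Because $q/p^*_k$ is continuous on the compact set $\overline\Omega$ and stays strictly below $1$ (tending to $0$ wherever $p^*_k=+\infty$), there is a uniform gap, and a H\"older argument with this extra room shows that the family $\{|u_n-u|^{q(\cdot)}\}_n$ is uniformly integrable. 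Vitali's convergence theorem combined with the a.e. convergence then gives $\rho_{q(\cdot)}(u_n-u)\ri 0$, and by the modular--norm equivalence \eq{L07} (together with \eq{L4}--\eq{L5}) this is precisely $\|u_n-u\|_{L^{q(\cdot)}(\Omega)}\ri 0$.

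The main obstacle is the base case: establishing the boundedness of the Riesz potential $I_1\colon L^{p(\cdot)}(\Omega)\ri L^{p^*_1(\cdot)}(\Omega)$ under log-H\"older continuity is the one genuinely hard analytic input, and every other step is either a reduction to it (the induction) or a soft consequence (the subcritical and compactness statements). As this borderline estimate is the content of the cited \cite[Theorem 2.3]{fanz} and \cite[Section 6]{CUF}, in a writeup I would quote it and spend the effort on the induction and the Vitali step.
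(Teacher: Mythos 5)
This statement is quoted from the literature (\cite[Theorem 2.3]{fanz}, \cite[Section 6]{CUF}); the paper gives no proof of its own, so there is nothing internal to compare your argument against. Judged on its merits, your outline is the standard proof and is essentially the one in \cite{CUF}: the hard analytic input is the boundedness of the Riesz potential $I_1\colon L^{p(\cdot)}\ri L^{p^*_1(\cdot)}$ under log-H\"older continuity (via the Hedberg trick and the maximal operator), followed by iteration to order $k$, the soft inclusion of \th{teor Leb inclusion} for subcritical $q$, and a Vitali/uniform-integrability argument for compactness. The computation $(p^*_{k-1})^*_1=p^*_k$ is correct, and the compactness step (a.e. convergence from Rellich--Kondrachov in $L^{p^-}$, plus the uniform gap $\inf_{\overline\Omega}(p^*_k-q)>0$ from compactness of $\overline\Omega$) is sound. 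Note that \cite{fanz} itself takes a different route --- a localization argument covering $\overline\Omega$ by small sets on which $p$ is nearly constant and patching the classical constant-exponent embeddings --- which avoids the Riesz-potential machinery at the price of more covering bookkeeping.

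The one place where your sketch is genuinely thinner than a proof is the point you yourself flag: the intermediate borderline regimes. For the induction to close, $r=p^*_{k-1}$ must itself be an admissible exponent, i.e.\ bounded with $r^+<\infty$ and log-H\"older continuous; this holds only when $(k-1)p^+<N$ with a uniform margin, and fails where $(k-1)p(x)$ reaches $N$ (there $p^*_{k-1}$ blows up and the conjugate is not even defined as a bounded exponent). Likewise, on the set $\{kp(x)=N\}$ the target $L^{p^*_k(\cdot)}$ with $p^*_k=+\infty$ is not the correct critical space (one only gets exponential integrability, not $L^\infty$), so the phrase ``bounded in $L^{p^*_k(\cdot)}$'' must be replaced by boundedness in $L^{\tilde q(\cdot)}$ for some continuous bounded $\tilde q$ with $q<\tilde q\le p^*_k$, which suffices for the H\"older/Vitali step. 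These case splits are exactly what the cited references carry out; acknowledging them is appropriate, but a self-contained writeup would have to do them.
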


\smallskip

Let us denote by $W_0^{k,p(\cdot)}(\Omega)$ the closure of $C_{0}^{\infty}(\Omega)$
in $W^{k,p(\cdot)}(\Omega)$. In fact, we are interested in the properties of the spaces $W^{2,p(\cdot)}(\Omega)$, $W_0^{1,p(\cdot)}(\Omega)$ and $W^{2,p(\cdot)}(\Omega)\cap W_0^{1,p(\cdot)}(\Omega)$. Due to the log-H\"{o}lder continuity of the exponent $p$, the space $W_0^{1,p(\cdot)}(\Omega)$ coincides with
$$W_0^{1,p(\cdot)}(\Omega)=\left\{u\in W^{1,p(\cdot)}(\Omega):\quad u=0\,\,{\rm on}\,\partial\Omega\right\},$$
and it can be endowed with the norm
$$\| u\|_{W_0^{1,p(\cdot)}(\Omega)}=\|\nabla u\|_{L^{p(\cdot)}({\Omega})},$$
due to the following Poincar\'{e} type inequality (see \cite[Proposition 2.3]{fan1}):
\begin{equation}\label{eq Poincare}
\| u\|_{L^{p(\cdot)}({\Omega})}\leq C \|\nabla u\|_{L^{p(\cdot)}({\Omega})}\quad\mbox{for all } u\in W_0^{1,p(\cdot)}(\Omega),
\end{equation}
where $C$ is a positive constant. The space $\left(W_0^{1,p(\cdot)}(\Omega), \|\cdot\|_{W_0^{1,p(\cdot)}(\Omega)}\right)$ is a separable and reflexive Banach space (see \cite[Proposition 2.1]{fan1}).

\smallskip

Obviously, the choice of the norms has a major influence on the development of the argumentation. Generally, we know that if $(X,\, \|\cdot\|_{X})$ and $(Y,\, \|\cdot\|_{Y})$ are Banach spaces, then $(X\cap Y,\, \|\cdot\|_{X\cap Y})$ is a Banach space too, where $\|u\|_{X\cap Y}=\|u\|_{X}+\|u\|_{Y}$. In our case, we have,
$$\|u\|_{W^{2,p(\cdot)}(\Omega)\cap W_0^{1,p(\cdot)}(\Omega)}=\|u\|_{W^{2,p(\cdot)}(\Omega)}+\|u\|_{W_0^{1,p(\cdot)}(\Omega)}=\|u\|_{L^{p(\cdot)}({\Omega})}+\|\nabla u\|_{L^{p(\cdot)}({\Omega})}+\sum_{|\alpha|=2}\|D^{\alpha}u\|_{L^{p(\cdot)}(\Omega)}.$$
Furthermore, $\left(W^{2,p(\cdot)}(\Omega)\cap W_0^{1,p(\cdot)}(\Omega),\,\|\cdot\|_{W^{2,p(\cdot)}(\Omega)\cap W_0^{1,p(\cdot)}(\Omega)}\right)$ is a separable and reflexive Banach space. In addition, we know that $\|\cdot\|_{W^{2,p(\cdot)}(\Omega)\cap W_0^{1,p(\cdot)}(\Omega)}$ and $\|\Delta(\cdot)\|_{L^{p(\cdot)}(\Omega)}$ are equivalent norms on $W^{2,p(\cdot)}(\Omega)\cap W_0^{1,p(\cdot)}(\Omega)$, see \cite[Theorem 4.4]{Fu}. However, taking into account the particularity of problem \eqref{newPr1}, which represents the subject of our investigation, the following representation of the norm might be best:
\begin{equation}\label{eq: norm a}
\|u\|_a=\inf \left\{\mu>0:\int_{\Omega}\left(\left|\frac{\Delta u(x)}{\mu}\right|^{p(x)}
+a(x)\left|\frac{u(x)}{\mu}\right|^{p(x)}\right)dx\leq 1\right\}
\end{equation}
for all $u\in W^{2,p(\cdot)}(\Omega)$ or $W^{2,p(\cdot)}(\Omega)\cap W_0^{1,p(\cdot)}(\Omega)$. The previously defined norm represents a norm on both $W^{2,p(\cdot)}(\Omega)$ or $W^{2,p(\cdot)}(\Omega)\cap W_0^{1,p(\cdot)}(\Omega)$ and it is equivalent to the usual norm defined here, see \cite[Remark 2.1]{aou}. Moreover, the modular inequalities that were appropriate for the norm of the Lebesgue space, can be extended to this situation, by proceeding similarly to \cite[Theorems 1.2-1.3]{fanz}. More precisely, for any $a$ taken as in (H1), we consider $\Lambda:W^{2,p(\cdot)}(\Omega)\ri\RR$ defined by
\begin{equation}\label{eq: def Lambda}
\Lambda(u)=\int_{\Omega }\left[ | \Delta u| ^{p(x)}+a(x) | u| ^{p(x)}\right] \,dx.
\end{equation}
Let us fix $u\in W^{2,p(\cdot)}(\Omega)\setminus\{0\}$. It is trivial to see that $\Lambda(\mu\,u)$ is even, and, for $\mu\in [0,\infty)$, $\Lambda(\mu\,u)$ increases strictly. Also,
let $\mu_n\ri\mu$. Since $(\mu_n)_n$ is bounded, $1<p^-\leq p^+<\infty$ and $a$ satisfies (H1), by Lebesgue's Dominated Convergence Theorem we deduce that $\Lambda(\mu_n\,u)\ri\Lambda(\mu\,u)$, hence $\Lambda(\mu\,u)$ is continuous.

Based on these properties of $\Lambda$, we have the following consequence.

\begin{corollary}\label{cor1 Lambda}
Let $u\in W^{2,p(\cdot)}(\Omega)\setminus\{0\}$. Then $\|u\|_a=|\kappa|$ if and only if $\Lambda\left(\frac u \kappa\right)=1$.
\end{corollary}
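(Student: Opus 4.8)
The plan is to reduce the whole statement to the behaviour of the single-variable function $t\mapsto\Lambda(tu)$, for which the three properties recorded just before the corollary (evenness, strict monotonicity on $[0,\infty)$, continuity) are tailor-made. First I would rewrite the defining infimum \eqref{eq: norm a} in modular form: since
$$\intom\left(\left|\frac{\Delta u(x)}{\mu}\right|^{p(x)}+a(x)\left|\frac{u(x)}{\mu}\right|^{p(x)}\right)dx=\Lambda\!\left(\frac{u}{\mu}\right),$$
the quantity in \eqref{eq: norm a} is exactly $\|u\|_a=\inf\{\mu>0:\Lambda(u/\mu)\le 1\}$. I would then introduce $\psi:[0,\infty)\ri\RR$, $\psi(t)=\Lambda(tu)$, which is continuous and strictly increasing with $\psi(0)=\Lambda(0)=0$ by the discussion preceding the corollary.

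Next I would check the coercivity at infinity. Since $u\neq 0$ in $W^{2,p(\cdot)}(\Omega)$, the set $\{x:|u(x)|>0\}$ has positive measure, and (H1) gives $a\ge a_0>0$; hence for $t\ge 1$,
$$\psi(t)\ge a_0\intom |tu|^{p(x)}\,dx\ge a_0\,t^{p^-}\int_{\{|u|>0\}}|u(x)|^{p(x)}\,dx\ri\infty\quad\mbox{as }t\ri\infty,$$
the last integral being a strictly positive constant. Combining $\psi(0)=0$, $\psi(t)\to\infty$, continuity and strict monotonicity, the intermediate value theorem yields a unique $t^*>0$ with $\psi(t^*)=\Lambda(t^*u)=1$.

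It remains to identify the infimum and pass to arbitrary $\kappa$. Writing $\mu=1/t$ and using $\Lambda(u/\mu)=\psi(t)$, the condition $\Lambda(u/\mu)\le 1$ is equivalent to $\psi(t)\le 1$, i.e. to $t\le t^*$, i.e. to $\mu\ge 1/t^*$; hence the infimum defining $\|u\|_a$ equals $1/t^*$ and is attained, so $\|u\|_a=1/t^*$ and $\Lambda(u/\|u\|_a)=\psi(t^*)=1$. For a general $\kappa\neq 0$, evenness of $\Lambda$ gives $\Lambda(u/\kappa)=\Lambda(u/|\kappa|)=\psi(1/|\kappa|)$. If $\|u\|_a=|\kappa|$, then $\Lambda(u/\kappa)=\Lambda(u/\|u\|_a)=1$; conversely, if $\Lambda(u/\kappa)=1$, then $\psi(1/|\kappa|)=1$, and the uniqueness of $t^*$ forces $1/|\kappa|=t^*=1/\|u\|_a$, whence $|\kappa|=\|u\|_a$. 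The only genuinely non-routine points are the coercivity $\psi(t)\to\infty$ and the attainment of the infimum; both are short because they rest on the continuity and strict monotonicity of $\Lambda$ already established, which guarantee that the sublevel set $\{\mu:\Lambda(u/\mu)\le 1\}$ has the single threshold $1/t^*$.
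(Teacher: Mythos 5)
Your proof is correct and follows essentially the same route as the paper: both arguments rest on the evenness, continuity and strict monotonicity of $\mu\mapsto\Lambda(\mu u)$ together with the limits at $0$ and $\infty$ to produce the unique level-$1$ point, and then identify the infimum in \eqref{eq: norm a} with that point. The only difference is that you explicitly verify the coercivity $\Lambda(tu)\to\infty$ via (H1), a step the paper merely asserts, and you use uniqueness of $t^*$ in place of the paper's two-sided inequality $\kappa\leq\mu_0\leq\kappa$; these are cosmetic, not substantive, deviations.
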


\begin{proof}
Without loss of generality, we can suppose that $\kappa>0$ because $\Lambda(\mu\,u)$ is even.

\smallskip
To show the direct implication, we consider that $\|u\|_a=\kappa$. Note that $\Lambda(0\cdot u)=0$ and that, whenever $\mu\ri\infty$, $\Lambda(\mu\,u)\ri\infty$. Therefore the continuity of $\Lambda(\mu\,u)$ ensures the existence of a $\mu_0\in (0,\infty)$ with the property that
\begin{equation}\label{eq: Lambda e 1}
\Lambda\left(\frac u \mu_0\right)=1.
\end{equation}

By \eqref{eq: norm a},
\begin{equation}\label{eq: norm a Lambda}
\kappa=\inf \left\{\mu>0:\Lambda\left(\frac u \mu\right)\leq 1\right\}.
\end{equation}
Since, by \eqref{eq: Lambda e 1}, $\mu_0\in \left\{\mu>0:\Lambda\left(\frac u \mu\right)\leq 1\right\}$, relation \eqref{eq: norm a Lambda} gives us
\begin{equation}\label{eq: kappa leq}
\kappa\leq\mu_0.
\end{equation}
At the same time,
$$\mu\geq\mu_0\quad\mbox{for all } \mu\in(0,\infty) \mbox{ such that }\Lambda\left(\frac u \mu\right)\leq 1$$
because $\Lambda\left(\frac u \mu_0\right)=1$ and $\Lambda(\mu\,u)$ increases strictly for $\mu\in [0,\infty)$. The previous inequality indicates that $\mu_0$ represents a lower bound for the set $\left\{\mu>0:\Lambda\left(\frac u \mu\right)\leq 1\right\}$, thus, by \eqref{eq: norm a Lambda},
\begin{equation}\label{eq: kappa geq}
\kappa\geq\mu_0.
\end{equation}
Putting together \eqref{eq: Lambda e 1}, \eqref{eq: kappa leq} and \eqref{eq: kappa geq}, we have obtained that $\|u\|_a=\kappa$ implies $\Lambda\left(\frac u \kappa\right)=1$.

\smallskip

For the reciprocal implication, let us assume that $\Lambda\left(\frac u \kappa\right)=1$. By proceeding as above, we first notice that $\kappa\in \left\{\mu>0:\Lambda\left(\frac u \mu\right)\leq 1\right\}$, hence, by \eqref{eq: norm a}, $\kappa\geq \|u\|_a$. Then, using again the monotonicity of $\Lambda(\mu\,u)$ for $\mu>0$, we deduce that $\mu_0$ represents a lower bound for the set $\left\{\mu>0:\Lambda\left(\frac u \mu\right)\leq 1\right\}$, so $\kappa\leq \|u\|_a$ and the conclusion follows.
\end{proof}

Now we are able to prove the modular-type inequalities that we previously mentioned.

\begin{proposition}
For $u,\, u_n\in W^{2,p(\cdot)}(\Omega)$ we have
\begin{equation}\label{eq:norm 1}
\| u\|_a <( =;>1) \Leftrightarrow \Lambda(
u) <( =;>1),
\end{equation}
\begin{equation}\label{eq:norm less}
\| u\|_a \leq 1\Rightarrow \| u\|_a
^{p^{+}}\leq \Lambda( u) \leq \| u\|_a ^{p^{-}},
\end{equation}
\begin{equation}\label{eq:norm greater}\| u\|_a \geq 1\Rightarrow \|
u\|_a ^{p^{-}}\leq \Lambda( u) \leq \| u\|_a
^{p^{+}},
\end{equation}
\begin{equation}\label{eq:norm conv}
\| u_n\|_a \to 0\,\,(\to \infty)\quad \Leftrightarrow \Lambda(
u_n) \to 0\,\,(\to \infty).
\end{equation}
\end{proposition}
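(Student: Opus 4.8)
The plan is to establish these four modular-type inequalities by exploiting Corollary~\ref{cor1 Lambda} together with the monotonicity and homogeneity-like scaling of $\Lambda(\mu u)$, mimicking the classical arguments for the Luxemburg norm recorded in \eqref{L40}--\eqref{L06}. The key structural fact is that, for fixed $u\neq 0$, the map $\mu\mapsto\Lambda(\mu u)$ is continuous, strictly increasing on $[0,\infty)$, vanishes at $\mu=0$ and tends to $+\infty$; moreover Corollary~\ref{cor1 Lambda} tells us that $\|u\|_a=\kappa$ is equivalent to $\Lambda(u/\kappa)=1$. Throughout I would write $\kappa=\|u\|_a$ and compare $\Lambda(u)=\Lambda(\kappa\cdot(u/\kappa))$ against the normalized value $\Lambda(u/\kappa)=1$ using strict monotonicity in the scaling parameter.

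First I would prove \eqref{eq:norm 1}. By Corollary~\ref{cor1 Lambda}, $\Lambda(u/\kappa)=1$. If $\kappa<1$ then $u=\kappa\cdot(u/\kappa)$ with scaling factor $\kappa<1$, so by strict monotonicity $\Lambda(u)=\Lambda(\kappa\cdot u/\kappa)<\Lambda(u/\kappa)=1$; the cases $\kappa=1$ and $\kappa>1$ follow identically, giving the three equivalences at once (the reverse implications follow because the three cases are mutually exclusive and exhaustive). Next, for \eqref{eq:norm less} and \eqref{eq:norm greater}, the engine is the elementary scaling estimate: for $0<t\le 1$ one has $t^{p^+}s^{p(x)}\le (ts)^{p(x)}\le t^{p^-}s^{p(x)}$ pointwise (and the reversed chain for $t\ge 1$), because $p^-\le p(x)\le p^+$. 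Applying this with $t=\kappa$ to the integrand of $\Lambda(u)=\int_\Omega\bigl[|\Delta u|^{p(x)}+a(x)|u|^{p(x)}\bigr]\,dx$, where $|\Delta u(x)|=\kappa\,|\Delta(u/\kappa)(x)|$ and similarly for $u$, I would factor out the appropriate power of $\kappa$ and integrate, using $\Lambda(u/\kappa)=1$ from Corollary~\ref{cor1 Lambda}. Concretely, when $\kappa\le 1$ this yields $\kappa^{p^+}\le\Lambda(u)\le\kappa^{p^-}$, i.e. \eqref{eq:norm less}, and when $\kappa\ge 1$ it yields $\kappa^{p^-}\le\Lambda(u)\le\kappa^{p^+}$, i.e. \eqref{eq:norm greater}.

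Finally, \eqref{eq:norm conv} follows from the sandwich estimates just proved. If $\|u_n\|_a\to 0$ then eventually $\|u_n\|_a\le 1$, so \eqref{eq:norm less} gives $\Lambda(u_n)\le\|u_n\|_a^{p^-}\to 0$; conversely if $\Lambda(u_n)\to 0$ then eventually $\Lambda(u_n)<1$, whence $\|u_n\|_a<1$ by \eqref{eq:norm 1} and then $\|u_n\|_a^{p^+}\le\Lambda(u_n)\to 0$ forces $\|u_n\|_a\to 0$. The $\to\infty$ statement is handled symmetrically using \eqref{eq:norm greater}.

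I expect the main subtlety, rather than a genuine obstacle, to be the careful bookkeeping in the factoring step for \eqref{eq:norm less}--\eqref{eq:norm greater}: one must split the single modular $\Lambda$ containing the two terms $|\Delta u|^{p(x)}$ and $a(x)|u|^{p(x)}$ and verify that the pointwise scaling inequality applies uniformly to both, so that the common factor $\kappa^{p^\pm}$ can be pulled outside the integral before invoking $\Lambda(u/\kappa)=1$. Because the weight $a(x)$ and the term $|\Delta u|^{p(x)}$ share the same exponent $p(x)$, this causes no difficulty, and the argument is otherwise a direct transcription of the standard Luxemburg-norm proof.
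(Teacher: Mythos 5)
Your proposal is correct and follows essentially the same route as the paper: both hinge on Corollary~\ref{cor1 Lambda} to get $\Lambda(u/\kappa)=1$ with $\kappa=\|u\|_a$, then use the pointwise bounds $\kappa^{p^+}\leq\kappa^{p(x)}\leq\kappa^{p^-}$ (for $\kappa\leq1$, reversed for $\kappa\geq1$) to sandwich $\Lambda(u)$ between $\kappa^{p^+}$ and $\kappa^{p^-}$, and finally deduce \eqref{eq:norm conv} from \eqref{eq:norm less}--\eqref{eq:norm greater}. The only point to add is the trivial case $u=0$ (where $u/\kappa$ is undefined), which the paper dispatches separately before setting $\kappa=\|u\|_a>0$.
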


\begin{proof}
For $\|u\|_a=0$, $\Lambda (u)=0$ and there is nothing to prove, thus we focus on the situation when $\|u\|_a\neq 0$. Let us denote $\|u\|_a=\kappa$. By Corollary \ref{cor1 Lambda} we have that $\Lambda\left(\frac u \kappa\right)=1$.

If $\kappa =1$, we immediately get that $\Lambda(u)=1$. Using again Corollary \ref{cor1 Lambda} we easily notice that the vice-versa holds too: if $\Lambda(u)=1$, then $\|u\|_a=1$.

If $\kappa <1$, the definition (\ref{eq: def Lambda}) enables us to write
$$\frac{1}{\kappa^{p^-}}\Lambda(u)\leq \Lambda\left(\frac u \kappa\right)\leq \frac{1}{\kappa^{p^+}}\Lambda(u).$$
Since $\Lambda\left(\frac u \kappa\right)=1$, we arrive at
$$\kappa^{p^+}\leq \Lambda(u)\leq \kappa^{p^-}<1.$$
Similarly, if $\kappa >1$,
$$1<\kappa^{p^-}\leq \Lambda(u)\leq \kappa^{p^+},$$
so the direct implication of \eqref{eq:norm 1} is proved, together with relations \eqref{eq:norm less} and \eqref{eq:norm greater}. Actually, the reciprocal implication of \eqref{eq:norm 1} is also true. Indeed, let us assume for example that $\Lambda(u)<1$. Then it is clear that $\|u\|_a<1$, otherwise, if $\|u\|_a\geq 1$, then, from what we have proved above, we get $\Lambda(u)\geq 1$, which contradicts our initial assumption. The case when $\Lambda(u)>1$ is similar.

Passing to the proof of \eqref{eq:norm conv}, if $\| u_n\|_a \to 0$, then \eqref{eq:norm less} implies
$$0\leq \Lambda(
u_n)\leq \| u_n\|_a ^{p^{-}}\ri 0,$$
while if $\| u_n\|_a \to \infty$, then \eqref{eq:norm greater} implies
$$\Lambda(
u_n)\geq \| u_n\|_a ^{p^{-}}\ri \infty.$$
Reciprocal, if $\Lambda(
u_n)\ri 0$, we use \eqref{eq:norm 1} and \eqref{eq:norm less} to arrive at $0\leq\| u_n\|_a \leq \left(\Lambda(
u_n)\right)^{1/p^{+}}\ri 0,$ while if $\Lambda(
u_n)\ri \infty$, we use \eqref{eq:norm 1} and \eqref{eq:norm greater} to arrive at
$\| u_n\|_a \geq \left(\Lambda(
u_n)\right)^{1/p^{+}}\ri \infty.$
\end{proof}

Since we are getting closer to our goal, that is, the discussion of problem \eqref{newPr1}, it is time to introduce the space where we will search for weak solutions to our problem and to establish some of its main properties.

\section{Weak solvability of the problem}
When treating a problem with no-flux boundary condition, we need to choose a variable exponent space that is more appropriate for our study than the ones presented in the previous section. Therefore we introduce the following subspace of $W^{2,p(\cdot)}(\Omega)$.

$$V=\left\{u\in W^{2,p(\cdot)}(\Omega):\quad u\big|_{\partial\Omega}\equiv{\rm constant}\right\}.$$
Notice that $V$ can be viewed also as
\begin{equation}\label{def 2 V}
V=\left\{u+c:\quad u\in W^{2,p(\cdot)}(\Omega)\cap W_0^{1,p(\cdot)}(\Omega),\, c \in\RR\right\}
\end{equation}
and we can prove the following result.

\begin{theorem}\label{t0: Banach space}
  $\left(V, \|\cdot\|_{W^{2,p(\cdot)}(\Omega)}\right)$  is a separable and reflexive Banach space.
\end{theorem}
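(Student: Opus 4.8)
The plan is to realize $V$ as a closed linear subspace of the ambient space $W^{2,p(\cdot)}(\Omega)$ and then to appeal to the standard principle that closed subspaces inherit good structural properties. Concretely, $W^{2,p(\cdot)}(\Omega)$ is already known (from the previous section) to be a separable and reflexive Banach space; a closed linear subspace of a reflexive Banach space is itself reflexive and Banach, and any subspace of a separable space is separable. That $V$ is a linear subspace is immediate, since a linear combination of functions that are constant on $\partial\Omega$ is again constant on $\partial\Omega$. Hence the whole statement reduces to proving that $V$ is closed in $\left(W^{2,p(\cdot)}(\Omega),\,\|\cdot\|_{W^{2,p(\cdot)}(\Omega)}\right)$.

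To prove closedness I would exploit the decomposition \eqref{def 2 V}. First I would confirm that identity, since it powers the argument. Set $Y=W^{2,p(\cdot)}(\Omega)\cap W_0^{1,p(\cdot)}(\Omega)$ and let $\RR\cdot 1$ be the one-dimensional subspace of constant functions, which belongs to $W^{2,p(\cdot)}(\Omega)$ because $\Omega$ is bounded. The inclusion $Y+\RR\cdot 1\subseteq V$ is clear: if $u\in Y$ and $c\in\RR$, then $u$ vanishes on $\partial\Omega$, so $u+c\equiv c$ there. Conversely, if $u\in V$ with $u\big|_{\partial\Omega}\equiv c$, then $u-c$ has zero trace and lies in $W^{2,p(\cdot)}(\Omega)$, so $u-c\in Y$ and $u=(u-c)+c$; here I use the zero-trace characterization of $W_0^{1,p(\cdot)}(\Omega)$, valid thanks to the log-H\"older continuity of $p$. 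Thus $V=Y+\RR\cdot 1$, and the sum is in fact direct, since a constant lying in $W_0^{1,p(\cdot)}(\Omega)$ must vanish on $\partial\Omega$ and hence be $0$.

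With this in hand, closedness follows from a finite-dimensional perturbation argument. The subspace $Y$ is closed in $W^{2,p(\cdot)}(\Omega)$: by definition $W_0^{1,p(\cdot)}(\Omega)$ is closed in $W^{1,p(\cdot)}(\Omega)$, and since the natural embedding $W^{2,p(\cdot)}(\Omega)\hookrightarrow W^{1,p(\cdot)}(\Omega)$ is continuous, $Y$ is the preimage of a closed subspace under a continuous map, hence closed. The classical fact that the algebraic sum of a closed subspace and a finite-dimensional subspace of a normed space is again closed then shows at once that $V=Y+\RR\cdot 1$ is closed. Combining this with the reduction of the first paragraph gives that $\left(V,\|\cdot\|_{W^{2,p(\cdot)}(\Omega)}\right)$ is a separable and reflexive Banach space.

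I expect the only genuinely delicate point to be the verification of \eqref{def 2 V}, more precisely the use of the zero-trace description of $W_0^{1,p(\cdot)}(\Omega)$ that lets one subtract the boundary constant and land back in $Y$; this is exactly where the log-H\"older hypothesis on $p$ is essential. Once that description is granted (as it is in the previous section), the remaining ingredients -- continuity of the embedding into $W^{1,p(\cdot)}(\Omega)$, the closed-plus-finite-dimensional lemma, and the inheritance of separability and reflexivity by closed subspaces -- are all standard.
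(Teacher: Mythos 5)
Your proof is correct, but it reaches closedness by a genuinely different route than the paper. The paper argues directly with Cauchy sequences: writing $v_n=u_n+c_n$ as in \eqref{def 2 V}, it invokes the equivalence of $\|\cdot\|_{W^{2,p(\cdot)}(\Omega)\cap W_0^{1,p(\cdot)}(\Omega)}$ with $\|\Delta(\cdot)\|_{L^{p(\cdot)}(\Omega)}$ on the intersection space --- the point being that $\Delta$ annihilates the constants $c_n$, so $\|\Delta(v_n-v_m)\|_{L^{p(\cdot)}(\Omega)}$ controls $\|u_n-u_m\|$ --- to show $(u_n)_n$ converges to some $\overline{u}$, then deduces via an $L^1$ estimate that $(c_n)_n$ converges to some $\overline{c}$, and concludes $v=\overline{u}+\overline{c}\in V$. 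You instead observe that $Y=W^{2,p(\cdot)}(\Omega)\cap W_0^{1,p(\cdot)}(\Omega)$ is closed in $W^{2,p(\cdot)}(\Omega)$ (as the preimage of the closed subspace $W_0^{1,p(\cdot)}(\Omega)$ under the continuous inclusion into $W^{1,p(\cdot)}(\Omega)$) and then apply the classical lemma that the sum of a closed subspace and a finite-dimensional subspace is closed. Your argument is shorter and avoids any reliance on the norm equivalence from \cite{Fu}; it also has the merit of actually verifying the identity \eqref{def 2 V}, which the paper asserts without proof and which both arguments need (your appeal to the zero-trace characterization of $W_0^{1,p(\cdot)}(\Omega)$, available under log-H\"older continuity, is exactly the right justification). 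What the paper's hands-on computation buys in exchange is an explicit exhibition of the limit decomposition $v=\overline{u}+\overline{c}$ and independence from the abstract closed-plus-finite-dimensional lemma. A minor remark: the directness of the sum $Y\oplus\RR\cdot 1$, while true, is not needed for the closedness conclusion.
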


\begin{proof}
Our goal is to prove that $V$ is a closed subspace of the separable and reflexive Banach space $\left(W^{2,p(\cdot)}(\Omega), \|\cdot\|_{W^{2,p(\cdot)}(\Omega)}\right)$. Let $(v_n)_n\subset V$ be such that it converges to $v\in W^{2,p(\cdot)}(\Omega)$. In order to prove our claim it is sufficient to show that $v\in V$.

Taking into account \eqref{def 2 V}, we are aware of the fact that there exist $(u_n)_n\subset W^{2,p(\cdot)}(\Omega)\cap W_0^{1,p(\cdot)}(\Omega)$ and $(c_n)_n\subset\RR$ such that, for all $n\in\NN$, $$v_n=u_n+c_n.$$ The equivalence of the norms $\|\cdot\|_{W^{2,p(\cdot)}(\Omega)\cap W_0^{1,p(\cdot)}(\Omega)}$ and $\|\Delta(\cdot)\|_{L^{p(\cdot)}(\Omega)}$ on $W^{2,p(\cdot)}(\Omega)\cap W_0^{1,p(\cdot)}(\Omega)$ enables us to write
$$
\|u_n-u_m\|_{W^{2,p(\cdot)}(\Omega)\cap W_0^{1,p(\cdot)}(\Omega)}\leq c \|\Delta(u_n-u_m)\|_{L^{p(\cdot)}(\Omega)} \leq
c \sum_{i=1}^N\left\|\frac {\partial^2}{\partial x_i^2}(u_n+c_n-u_m-c_m)\right\|_{L^{p(\cdot)}(\Omega)}
$$
where $c$ represents a generic positive constant that may vary along the calculus, as it is the case for the remaining of our paper.
Consequently,
\begin{equation}\label{eq sir cauchy}
\|u_n-u_m\|_{W^{2,p(\cdot)}(\Omega)\cap W_0^{1,p(\cdot)}(\Omega)}\leq c \|v_n-v_m\|_{W^{2,p(\cdot)}(\Omega)}.
\end{equation}
But $(v_n)_n$ is converging to $v$ in $\left(W^{2,p(\cdot)}(\Omega), \|\cdot\|_{W^{2,p(\cdot)}(\Omega)}\right)$, hence it is a Cauchy sequence, and \eqref{eq sir cauchy} implies that $(u_n)_n$ is a Cauchy sequence in the Banach space $\left(W^{2,p(\cdot)}(\Omega)\cap W_0^{1,p(\cdot)}(\Omega),\,\|\cdot\|_{W^{2,p(\cdot)}(\Omega)\cap W_0^{1,p(\cdot)}(\Omega)}\right)$. It follows immediately that $(u_n)_n$ is converging to a function $\overline{u}\in W^{2,p(\cdot)}(\Omega)\cap W_0^{1,p(\cdot)}(\Omega)$.

On the other hand, we have the continuous embedding $L^{p(\cdot)}(\Omega)\hookrightarrow L^{1}(\Omega)$, so
\begin{equation}\label{eq sir cauchy cn}
\|c_n-c_m\|_{L^{1}(\Omega)}\leq c \|c_n-c_m\|_{L^{p(\cdot)}(\Omega)}\leq c\|v_n-v_m
\|_{L^{p(\cdot)}(\Omega)}+ c\|u_m-u_n\|_{L^{p(\cdot)}(\Omega)}.
\end{equation}
Since both $(v_n)_n$ and $(u_n)_n$ are Cauchy sequences in $W^{2,p(\cdot)}(\Omega)$, respectively in  $W^{2,p(\cdot)}(\Omega)\cap W_0^{1,p(\cdot)}(\Omega)$, by the definition of the corresponding norms and by the boundedness of $\Omega$, we infer that $(c_n)_n$ is a Cauchy sequence in $(\RR, |\cdot|)$. Therefore $(c_n)_n$ is converging to a $\overline{c}\in\RR$ and we have obtained that $v=\overline{u}+\overline{c}\in V,$ which completes our proof.
\end{proof}

Now that we have established some basic properties of the space $V$, we are ready to introduce the definition of a weak solution to our problem. To this purpose, we consider a smooth function $u$ that verifies \eqref{newPr1} and, by applying Green's formula, we get
$$\int_{\Omega } | \Delta u| ^{p(x)-2}\Delta u \Delta v\, dx + \int_{\partial\Omega } \frac{\partial}{\partial \nu}\left(| \Delta u| ^{p(x)-2}\Delta u\right) v\, dx - \int_{\partial\Omega } | \Delta u| ^{p(x)-2}\Delta u \frac{\partial v}{\partial \nu}\, dx+ \int_{\Omega } a(x)| u| ^{p(x)-2}uv\, dx=$$
$$=\lambda\int_{\Omega }f(x,u)v\,dx\qquad\mbox{for all }v\in C^\infty(\Omega).$$
Taking into consideration the fact that $V$ is a closed subspace of $\left(W^{2,p(\cdot)}(\Omega), \|\cdot\|_{W^{2,p(\cdot)}(\Omega)}\right)$ together with the density result Theorem \ref{th dens} and the boundary conditions, we arrive at the following formulation.

\begin{definition}
We say that  $u\in V$   is a weak solution of the boundary value problem \eqref{newPr1}  if and only if

$$\int_{\Omega } | \Delta u| ^{p(x)-2}\Delta u \Delta v\, dx+ \int_{\Omega}a(x)|u|^{p(x)-2}uv\;dx-\lambda \int_{\Omega}f(x,u)v\;dx=0\quad\mbox{for all }v\in V.$$
\end{definition}

To be able to find a weak solution to \eqref{newPr1}, we rely on the critical point theory, thus to problem \eqref{newPr1} we associate the functional $$I:V\ri\RR, \qquad I=I_1-\lambda I_2,$$
 where
\begin{equation}\label{eq functionals}
I_1(u)=\int_{\Omega }\frac{1}{p(x)}\left[ | \Delta u| ^{p(x)}+a(x) | u| ^{p(x)}\right] \,dx\quad\mbox{and}\quad I_2(u)=\int_{\Omega}F(x,u)\,dx.
\end{equation}

 By proceeding similarly to \cite[Proposition 2.5]{aou}, one can establish the following.

 \begin{proposition}\label{Prop aou I_1}Let $I_1:V\ri\RR$ be  the above defined functional.
 \begin{itemize}

\item[(i)] $I_1$ is of class $C^1$, with the G\^{a}teaux derivative defined by
 $$\langle I'_1(u), v\rangle=\int_{\Omega } | \Delta u| ^{p(x)-2}\Delta u \Delta v\, dx+ \int_{\Omega}a(x)|u|^{p(x)-2}uv\;dx.$$

 \item[(ii)] $I_1$ is (sequentially) weakly lower semicontinuous, that is, for any $u\in V$ and any subsequence $(u_n)_n\subset V$
such that $u_n\rightharpoonup u$ weakly in $V$, there holds
$$\Phi(u)\leq\liminf_{n\ri\infty} \Phi(u_n).$$

 \item[(iii)] $I'_1:V\ri V^*$ is of type (S+), that is, $u_n\rightharpoonup u$ and $\limsup_{n\ri\infty}I'_1(u_n)(u_n-u)\leq 0$ imply that $u_n\ri u$.
     \end{itemize}
 \end{proposition}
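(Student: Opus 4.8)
The plan is to treat the three assertions in order, leaning on the convex structure of $I_1$ and on the modular inequalities \eqref{eq:norm 1}--\eqref{eq:norm conv}, much as in \cite{aou}. For part (i), I would first obtain the G\^ateaux derivative by differentiating $t\mapsto I_1(u+tv)$ under the integral sign. Since $\frac{d}{ds}\frac{1}{p(x)}|s|^{p(x)}=|s|^{p(x)-2}s$, a formal computation yields the stated formula; to justify passing the derivative inside, I would control the difference quotients by the mean value theorem and dominate them by an $L^1(\Omega)$ function, using the H\"older inequality \eqref{Hol} together with the continuous embeddings of Theorem \ref{th emb} to guarantee that both $|\Delta u|^{p(x)-2}\Delta u$ and $a(x)|u|^{p(x)-2}u$ lie in $L^{p'(\cdot)}(\Omega)$, so that $\langle I_1'(u),\cdot\rangle$ defines an element of $V^\star$. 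To upgrade from G\^ateaux differentiability to $C^1$, I would show that $u_n\ri u$ in $V$ forces $I_1'(u_n)\ri I_1'(u)$ in $V^\star$; this reduces to the continuity of the Nemytskii operators $w\mapsto |w|^{p(x)-2}w$ from $L^{p(\cdot)}(\Omega)$ into $L^{p'(\cdot)}(\Omega)$, which follows from \eqref{L07} after passing to an a.e.\ convergent subsequence.

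For part (ii), the key observation is that for each fixed $x$ the map $\xi\mapsto\frac{1}{p(x)}|\xi|^{p(x)}$ is convex, so $I_1$ is a convex functional of the pair $(\Delta u,u)$; being also continuous (indeed $C^1$ by (i)), it is weakly lower semicontinuous by the standard fact that a convex continuous functional on a Banach space is weakly lower semicontinuous. Here $\Phi$ in the statement is of course $I_1$, so no further work is needed.

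Part (iii), which I expect to be the main obstacle, I would argue as follows. Given $u_n\weak u$ with $\limsup_{n\ri\infty}\langle I_1'(u_n),u_n-u\rangle\leq 0$, I first note that $\langle I_1'(u),u_n-u\rangle\ri 0$, since $I_1'(u)\in V^\star$ and $u_n-u\weak 0$; subtracting, $\limsup_{n\ri\infty}\langle I_1'(u_n)-I_1'(u),u_n-u\rangle\leq 0$. Monotonicity of $I_1'$, which follows from the convexity established in (ii), gives the reverse inequality, hence $\langle I_1'(u_n)-I_1'(u),u_n-u\rangle\ri 0$. Writing this out,
$$\intom (|\Delta u_n|^{p(x)-2}\Delta u_n-|\Delta u|^{p(x)-2}\Delta u)(\Delta u_n-\Delta u)\,dx+\intom a(x)(|u_n|^{p(x)-2}u_n-|u|^{p(x)-2}u)(u_n-u)\,dx\ri 0,$$
where both integrands are nonnegative. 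I would then invoke the well-known pointwise (Simon-type) inequalities: on $\{p(x)\geq 2\}$ each integrand dominates a constant multiple of $|\Delta u_n-\Delta u|^{p(x)}$, respectively $a(x)|u_n-u|^{p(x)}$, while on $\{1<p(x)<2\}$ one only obtains the weaker lower bound involving the factor $(|\cdot|+|\cdot|)^{p(x)-2}$. Recovering the modular terms from this weaker bound and summing the two regions would yield $\Lambda(u_n-u)\ri 0$, whence \eqref{eq:norm conv} immediately gives $\|u_n-u\|_a\ri 0$, i.e.\ $u_n\ri u$ strongly.

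The hardest step is precisely the passage on $\{1<p(x)<2\}$: there the Simon inequality is not in modular form, and converting the quotient-type estimate into control of $\intom |\Delta u_n-\Delta u|^{p(x)}\,dx$ requires a careful H\"older split with the variable exponents $2/p(x)$ and its conjugate, together with the a priori bound on $\|u_n\|_a$ coming from the boundedness of the weakly convergent sequence. Organizing these estimates uniformly in $x$ across the two regions, so that everything collapses onto the single modular $\Lambda(u_n-u)$, is the technical crux; once that is achieved, the equivalence \eqref{eq:norm conv} closes the argument at once.
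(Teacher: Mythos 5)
Your outline is sound, but note that the paper itself offers no proof of this proposition: it simply states that the result can be established ``by proceeding similarly to \cite[Proposition 2.5]{aou}'' and moves on. So you are not diverging from the paper's argument so much as supplying the argument the paper delegates to that reference, and what you supply is the standard one. Part (i) is fine: the mean-value-theorem domination of the difference quotients, the observation that $|\Delta u|^{p(x)-2}\Delta u$ and $a(x)|u|^{p(x)-2}u$ lie in $L^{p'(\cdot)}(\Omega)$ (since $(p(x)-1)p'(x)=p(x)$), and the continuity of the Nemytskii operators via \eqref{L07} are exactly the ingredients used in the variable-exponent literature; just remember the usual ``every subsequence has a further a.e.-convergent subsequence'' device to get convergence of the whole sequence. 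Part (ii) via convexity plus continuity is the cleanest route (and you correctly read the statement's $\Phi$ as $I_1$). Part (iii) is where your write-up remains a sketch rather than a proof: the passage from $\langle I_1'(u_n)-I_1'(u),u_n-u\rangle\to 0$ to $\Lambda(u_n-u)\to 0$ on the region $\{1<p(x)<2\}$ is precisely the step you defer, and it does require the H\"older split with exponents $2/p(x)$ and $(2/p(x))'$ together with the uniform bound on $\Lambda(u_n)$ coming from boundedness of the weakly convergent sequence; since you have identified the correct Simon-type inequalities and the correct closing step via \eqref{eq:norm conv}, the plan would go through, but as written that step is announced rather than executed. In short: correct strategy, more explicit than the paper, with the $1<p(x)<2$ case of (S+) still to be written out in full.
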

Thus, due to the properties fulfilled by $f$, we can easily deduce that $I$ is of class $C^1$, with G\^{a}teaux derivative defined by
 $$\langle I'(u), v\rangle=\int_{\Omega } | \Delta u| ^{p(x)-2}\Delta u \Delta v\, dx+ \int_{\Omega}a(x)|u|^{p(x)-2}uv\;dx-\lambda \int_{\Omega}f(x,u)v\;dx$$
 so any critical point of $I$ is a weak solution to \eqref{newPr1}. Therefore, in what follows we focus on studying the existence and the multiplicity of the nontrivial critical points  of $I$.

\section{The existence result}
We base our first result on the classical theorem of calculus of variations:

\begin{theorem}{(see \cite[Section 2, Theorem 1.2]{Costa})}\label{tinf}
Assume that $X$ is a reflexive Banach space of norm $\|\cdot\|_X$
and the functional $\Phi:X\ri\RR$ is
\begin{itemize}

\item[(i)] coercive on $X$, that is, $\Phi(u)\ri\infty$ as
$\|u\|_X\ri\infty$;

\item[(ii)] (sequentially) weakly lower semicontinuous on
$X$.
\end{itemize}
Then $\Phi$ is bounded from below on $X$ and attains its infimum in
$X$.
\end{theorem}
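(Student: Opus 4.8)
The plan is to run the \emph{direct method of the calculus of variations}, using each hypothesis exactly where it is designed to be used: coercivity to keep a minimizing sequence bounded, and weak lower semicontinuity to pass to the limit. First I would set
$$m=\inf_{u\in X}\Phi(u),$$
and note that $m<+\infty$, since $\Phi$ takes real values (for any fixed $u_0\in X$ we have $m\leq\Phi(u_0)<\infty$); a priori $m\in[-\infty,+\infty)$, and the goal is to show that $m$ is finite and attained. By the definition of the infimum there is a minimizing sequence $(u_n)_n\subset X$ with $\Phi(u_n)\ri m$. Since this real sequence converges, it is bounded above, say $\Phi(u_n)\leq M$ for all $n$. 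If $(u_n)_n$ were unbounded in $X$, a subsequence would satisfy $\|u_{n_k}\|_X\ri\infty$, and coercivity (i) would force $\Phi(u_{n_k})\ri\infty$, contradicting $\Phi(u_{n_k})\leq M$; hence $(u_n)_n$ is bounded.

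The crucial step is then to extract a weakly convergent subsequence, and this is where reflexivity enters: in a reflexive Banach space every bounded sequence admits a weakly convergent subsequence. So, after relabelling, $u_{n_k}\weak u^\star$ for some $u^\star\in X$. Now weak lower semicontinuity (ii) gives
$$\Phi(u^\star)\leq\liminf_{k\ri\infty}\Phi(u_{n_k})=m,$$
the final equality holding because a subsequence of $(\Phi(u_n))_n$ still converges to $m$. Since $\Phi(u^\star)\geq m$ by the definition of the infimum, we conclude $\Phi(u^\star)=m$; in particular $m=\Phi(u^\star)>-\infty$, so $\Phi$ is bounded below and attains its infimum at $u^\star$.

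I expect the only genuinely nontrivial ingredient to be the weak sequential compactness of bounded sets, i.e. the step turning boundedness into a weakly convergent subsequence. This rests on the functional-analytic fact (the theorems of Kakutani and Eberlein--\v{S}mulian) that the closed balls of a reflexive Banach space are weakly sequentially compact, and it is precisely where the reflexivity hypothesis is indispensable. Everything else, namely the boundedness of the minimizing sequence and the closing chain of inequalities, is elementary once the two hypotheses are available.
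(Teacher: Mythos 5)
Your proof is correct and is exactly the standard direct-method argument (minimizing sequence, coercivity $\Rightarrow$ boundedness, reflexivity $\Rightarrow$ weak sequential compactness via Kakutani and Eberlein--\v{S}mulian, weak lower semicontinuity to close the chain), which is the argument given in the cited source; the paper itself states this theorem only as a quoted classical result and offers no proof of its own. The single cosmetic point is the phrase ``since this real sequence converges'': when a priori $m=-\infty$ the sequence $\Phi(u_n)$ merely tends to $-\infty$ rather than converging in $\RR$, but it is still bounded above, which is all your argument uses, so nothing breaks.
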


By applying this result to the functional $I$, we prove the existence of a nontrivial weak solution.

\begin{theorem}\label{teorr1}
Let $\Omega\subset\RR^N$ ($N\geq 2$) be a bounded domain with
smooth boundary and $p$ be log-H\"{o}lder continuous with $1<p^-\leq p^+<\infty$ for all $x\in\Omega.$
Assume hypotheses (H1)-(H3) take place. Then there exists a constant $\lambda_0>0$ such that problem \eqref{newPr1} has at least one nontrivial weak solution in $V$ for every $\lambda>\lambda_0$.
\end{theorem}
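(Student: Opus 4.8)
The plan is to apply the Weierstrass-type result \th{tinf} to the functional $I=I_1-\lambda I_2$ on the reflexive Banach space $V$, so the two things I must verify are coercivity and weak lower semicontinuity of $I$, and then I must separately argue that the minimizer is nontrivial for $\lambda$ large. Weak lower semicontinuity is essentially free: Proposition \ref{Prop aou I_1}(ii) gives it for $I_1$, and the compact embedding $V\hookrightarrow W^{2,p(\cdot)}(\Omega)\hookrightarrow L^{q(\cdot)}(\Omega)$ (\th{th emb}, with $q$ strictly subcritical) together with the growth bound on $f$ coming from (H3)--(H4) forces $I_2$ to be weakly continuous, so $I$ is weakly lower semicontinuous. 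The real work is coercivity.

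\medskip

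For coercivity I would first convert the leading term into modular language: by the definition \eqref{eq functionals} and the modular inequalities \eqref{eq:norm greater} in the preceding Proposition, for $\|u\|_a\geq 1$ one has
\[
I_1(u)\geq \frac{1}{p^+}\Lambda(u)\geq \frac{1}{p^+}\|u\|_a^{p^-},
\]
where $\|\cdot\|_a$ is the equivalent norm \eqref{eq: norm a}. So $I_1$ grows at least like $\|u\|_a^{p^-}$. The key step is to control $I_2(u)=\int_\Omega F(x,u)\,dx$ from above by something of strictly lower order. Hypotheses (H3) and (H4) give, for every $\ep>0$, a constant $C_\ep>0$ with
\[
|f(x,t)|\leq \ep|t|^{p(x)-1}+C_\ep|t|^{q(x)-1}\quad\text{for a.e. }x\in\Omega,\ \text{all }t,
\]
for some exponent $q$ with $p(x)<q(x)<p^*_2(x)$ (subcritical), whence after integrating in $t$,
\[
F(x,t)\leq \frac{\ep}{p^-}|t|^{p(x)}+ \frac{C_\ep}{q^-}|t|^{q(x)}.
\]
Integrating over $\Omega$, the first term is absorbed into $I_1$ (choosing $\ep$ small, using that $\int a|u|^{p(x)}$ and $\int|u|^{p(x)}$ are comparable by (H1)), while the second term is bounded by a modular in $L^{q(\cdot)}$, hence by $c\,\|u\|_{L^{q(\cdot)}}^{q^+}\leq c\,\|u\|_V^{q^+}$ via the continuous embedding. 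The obstacle is then delicate: both $I_1$ and the bad term can behave like powers of $\|u\|_a$, and in a variable-exponent setting the exponents $p^-,p^+,q^-,q^+$ must be tracked carefully. The clean way around this is to exploit that (H3) makes $f$ genuinely subcritical in the \emph{pointwise} growth sense, so that $\lim_{|t|\to\infty}F(x,t)/|t|^{p(x)}=0$; this lets me write, for any $\delta>0$, $F(x,t)\leq \delta|t|^{p(x)}+C_\delta$, giving
\[
\lambda I_2(u)\leq \lambda\delta\,\rho_{p(\cdot)}(u)+\lambda C_\delta|\Omega|\leq \lambda\delta\,c\,\Lambda(u)+C.
\]
Choosing $\delta$ so small that $\lambda\delta\,c<1/p^+$ then yields $I(u)\geq (1/p^+-\lambda\delta c)\Lambda(u)-C\to\infty$ as $\|u\|_a\to\infty$ by \eqref{eq:norm conv}. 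This is the cleanest route and the step I expect to require the most care, because $\delta$ depends on $\lambda$ and one must confirm the argument survives for each fixed $\lambda$.

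\medskip

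Granting coercivity and weak lower semicontinuity, \th{tinf} produces a global minimizer $u_\lambda\in V$, which is a critical point of $I$ and hence a weak solution. It remains to rule out $u_\lambda\equiv 0$ for large $\lambda$, and this is where (H2) enters and where $\lambda_0$ is produced. The strategy is to exhibit a fixed test function $w\in V$ with $I(w)<0$ once $\lambda$ is large enough, which forces $\inf_V I<0=I(0)$ and hence $u_\lambda\neq 0$. I would build $w$ from the ball $B$ in (H2): take $w$ supported (or essentially concentrated) near $B$ with $w\equiv t_0$ on $B$, so that $\int_\Omega F(x,w)\,dx\geq \int_B F(x,t_0)\,dx - (\text{small correction})>0$ by choosing the transition region thin. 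Since $I_1(w)$ is a fixed finite number while $I(w)=I_1(w)-\lambda I_2(w)$ with $I_2(w)>0$, setting
\[
\lambda_0=\frac{I_1(w)}{I_2(w)}
\]
guarantees $I(w)<0$ for every $\lambda>\lambda_0$. Hence for such $\lambda$ the minimizer is a nontrivial weak solution, completing the proof. The construction of $w$ realizing $I_2(w)>0$ while keeping $w\in V$ (so $w$ constant on $\partial\Omega$, which is automatic if $w$ vanishes near $\partial\Omega$) is the one genuinely problem-specific ingredient, and it is precisely what (H2) is designed to supply.
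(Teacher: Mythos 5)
Your proposal is correct and follows essentially the same route as the paper: coercivity by absorbing the $\varepsilon|t|^{p(x)}$-part of the growth bound from (H3) into the principal modular (the paper absorbs it into the weight $a$, writing $\widetilde a=a-\lambda\varepsilon$, while you keep a constant remainder $C_\delta$ -- an equivalent bookkeeping), weak lower semicontinuity via Proposition \ref{Prop aou I_1} and the compact embedding, and nontriviality via a test function equal to $t_0$ on $B$ and vanishing near $\partial\Omega$, with $\lambda_0=I_1(w)/I_2(w)$ exactly as in \eqref{eq:lambda0}. The only slip is your appeal to (H4) for the growth bound in the lower semicontinuity step: Theorem \ref{teorr1} assumes only (H1)--(H3), and indeed (H3) together with the Carath\'eodory property suffices there.
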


\begin{proof}
Let us first deal with the coercivity of $I$.
Hypothesis (H3) implies that, for all $\varepsilon>0$, there exists $\delta_{\varepsilon}>0$ such that for all $|t|>\delta_{\varepsilon}$ and all $x \in \Omega$ we have
\begin{equation}\label{marginireJ2_30}\nonumber
 |f(x,t)|\leq\varepsilon|t|^{p^(x)-1}.
\end{equation}
For the moment, we arbitrarily fix $\varepsilon>0$. Then the continuity of $f$  in its second argument indicates that  for all $t \in \RR$ and all $x \in \Omega$ there exists $c_0>0$ such that
\begin{equation}\label{marginireJ2_32}
|f(x,t)|\leq c_0+\varepsilon|t|^{p^(x)-1}.
\end{equation}
Taking into account the definition of $I$ (see \eqref{eq functionals}) and \eqref{marginireJ2_32}, we arrive at
$$I(u)\geq \frac{1}{p^+}\int_{\Omega }\left[ | \Delta u| ^{p(x)}+(a(x)-\lambda\varepsilon) | u| ^{p(x)}\right]\, dx-\lambda\,c_0\|u\|_{L^{1}(\Omega)}.$$
Let us choose $\varepsilon$ such that $\varepsilon<a_0/\lambda$ because in this way
$$\widetilde{a}=a-\lambda\varepsilon$$
verifies (H1). We know that $V$ is endowed with the norm $\|\cdot\|_{W^{2,p(\cdot)}(\Omega)}$ which is equivalent to the norm $\|\cdot\|_{\widetilde{a}}$ introduced by \eqref{eq: norm a}. Then for any $u\in V$ with $\|u\|_{\widetilde{a}}\geq 1$, inequality \eqref{eq:norm greater} leads to
\begin{equation}\label{eq: I1 coerc}I(u)\geq \frac{1}{p^+}\|u\|_{\widetilde{a}}^{p^-}-\lambda\,c_0\|u\|_{L^{1}(\Omega)}.
\end{equation}
At the same time, we have that $1<p_2^\star(x)$ for all $x\in\overline{\Omega}$, therefore by Theorem \ref{th emb} and by \eqref{eq: I1 coerc} we deduce that  there exists $c>0$ such that
$$I(u)\geq \frac{1}{p^+}\|u\|_{\widetilde{a}}^{p^-}-\lambda\,c\|u\|_{\widetilde{a}},$$
hence $I$ is coercive.

\smallskip

Moving further, to the weakly lower semicontinuity of $I$, we already know that $I_1$ is weakly lower semicontinuous, by Proposition \ref{Prop aou I_1}. To investigate if this property holds for $I_2$ too,  we assume $u_n\rightharpoonup u$ in $V$.
But $V$ is a closed subspace of $W^{2, p(\cdot)}(\Omega)$ thus the compact embedding produced by Theorem \ref{th emb} gives us
 \begin{equation}\label{convlpm}
   u_n\rightarrow u \mbox{ in } L^{p^(\cdot)}(\Omega) \quad\mbox{and}\quad  u_n\rightarrow u \mbox{ in } L^{1}(\Omega).
\end{equation}

 Using the mean value theorem,there exists $v$ which takes values strictly between the values of $u$ and $u_n$ such that
 $$|I_2(u_n)-I_2(u)|\leq \int_{\Omega}|F(x,u_n)-F(x,u)|\,dx\leq \int_{\Omega}|u_n-u|\sup_{x\in\Omega}|f(x,v(x)|\,dx,$$
hence, by \eqref{marginireJ2_32} and \eqref{convlpm} the functional $I_{2}$ is weakly continuous, so $I$ is weakly continuous also. Consequently, we obtain the weakly lower semicontinuity of $I$.

\smallskip

Now we are in position to apply Theorem \ref{tinf} and to find $u_1\in V$ in which $I$ attains its infimum, hence $u_1$ represents a weak solution to problem \eqref{newPr1}. Furthermore, for all $\lambda>0$,
\begin{equation}\label{MINIM}
   I({u}_1)\leq I(u) \quad\mbox{for all}\quad u\in V.
\end{equation}
Given the ball $B$ provided by hypothesis (H2), we can take $\varepsilon>0$ sufficiently small such that
$$\overline{B_\varepsilon}:=\overline{\{x\in\Omega|\quad{\rm dist}(x,\,B)\leq\varepsilon\}}\subset\Omega.$$
Furthermore, we can construct the following $C_c^1$ function:
$$u_\varepsilon(x):=\left\{
                      \begin{array}{c}
                        t_0, \quad\mbox{when }\qquad x\in B, \\
                        0, \quad\mbox{when }x\in \Omega\setminus B_\varepsilon. \\
                      \end{array}
                    \right.
$$
Then
$$I(u_\ep)\leq I_1(u_\ep)-\lambda\int_{B}F(x,t_0)\,dx-\lambda \int_{B_\ep\setminus B}F(x,u_\ep)\,dx.$$
By the definition of $F$ we are able to fix $\ep_0$ sufficiently small such that there exists a positive constant $\alpha_0$ with the property that
$$I(u_{\ep_0})\leq I_1(u_{\ep_0})-\lambda\alpha_0\int_{B}F(x,t_0)\,dx.$$
Now, by taking
 \begin{equation}\label{eq:lambda0}
\lambda_0:=\frac{I_1(u_{\ep_0})}{\alpha_0\int_{B}F(x,t_0)\,dx}>0
\end{equation}
we deduce that
$I(u_{\ep_0})< 0$ for all $\lambda>\lambda_0$.
By choosing $u=u_{\ep_0}$ in \eqref{MINIM} we obtain that ${u}_1$ is nontrivial for all $\lambda>\lambda_0$ because $I(0)=0$, and we have completed our proof.
\end{proof}

\section{The multiplicity result}

For the multiplicity result of this paper we rely on a variant of the celebrated mountain pass theorem (see for example \cite{Jabri,KrisR,PRad}) of Ambrosetti and Rabinowitz.
\begin{theorem}\label{mpt} Let $\left(X,\|\cdot\|_{X}\right)$ be a Banach space. Assume that
$\Phi\in\!C^1(X;\RR)$ satisfies the Palais-Smale condition, that is, any sequence $(u_n)_n\subset X$ such that
$\left(\Phi(u_n)\right)_n$ is bounded and $\Phi'(u_n)\rightarrow 0$ in $X^{\star}$ as $n \rightarrow\infty$, contains a
convergent subsequence. Also, assume that $\Phi$ has a mountain pass geometry, that is,
\begin{itemize} \item[($i$)] there exist two constants $\tau>0$ and $\rho\in\RR$ such that $\Phi(u)\geq \rho$ if $\|u\|_X=\tau;$

\item[($ii$)] $\Phi(0)<\rho$ and there exists $e\in X$ such that $\|e\|_X>\tau$ and $\Phi(e)<\rho$.

\end{itemize} Then $\Phi$ has a critical point $u_0\in X\setminus \{0,\,e\}$ with
 critical value $$\Phi(u_0)=\di\inf_{\gamma\in {\cal P}}\sup_{u\in \gamma}\di \Phi(u)\geq{\rho}>0,$$
 where ${\cal P}$ denotes the class of the paths $\gamma\in C([0,1];X)$ joining 0 to $e$.
\end{theorem}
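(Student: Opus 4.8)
The statement is the classical mountain pass theorem of Ambrosetti and Rabinowitz, so the plan is to run the standard minimax argument resting on a deformation lemma. Throughout, write $c=\di\inf_{\gamma\in\mathcal{P}}\sup_{u\in\gamma}\Phi(u)$ for the candidate critical value, where $\mathcal{P}$ is the class of paths joining $0$ to $e$.

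First I would establish the lower bound $c\geq\rho$. The key observation is that every admissible path must meet the sphere $\{u\in X:\|u\|_X=\tau\}$. Indeed, for any $\gamma\in\mathcal{P}$ the function $t\mapsto\|\gamma(t)\|_X$ is continuous on $[0,1]$ with $\|\gamma(0)\|_X=0<\tau$ and $\|\gamma(1)\|_X=\|e\|_X>\tau$, so by the intermediate value theorem there exists $t_\gamma\in(0,1)$ with $\|\gamma(t_\gamma)\|_X=\tau$. Hypothesis $(i)$ then yields $\Phi(\gamma(t_\gamma))\geq\rho$, hence $\sup_{u\in\gamma}\Phi(u)\geq\rho$; taking the infimum over $\gamma\in\mathcal{P}$ gives $c\geq\rho$.

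The core of the proof is to show that $c$ is actually a critical value of $\Phi$, and here I would argue by contradiction. Suppose $c$ were a regular value. Since $\Phi\in C^1(X;\RR)$ satisfies the Palais-Smale condition, one may invoke the quantitative deformation lemma: there exist $\varepsilon\in(0,\,c-\max\{\Phi(0),\Phi(e)\})$ and a continuous map $\eta:[0,1]\times X\ri X$ such that $\eta(1,\cdot)$ pushes the sublevel set $\{\Phi\leq c+\varepsilon\}$ into $\{\Phi\leq c-\varepsilon\}$ while fixing every point at which $\Phi\leq c-\varepsilon$. The interval for $\varepsilon$ is nonempty precisely because $\Phi(0)<\rho\leq c$ and $\Phi(e)<\rho\leq c$. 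The construction of $\eta$ is classical: one builds a locally Lipschitz pseudo-gradient vector field for $\Phi$ away from its critical set and flows along its negative direction, the Palais-Smale condition ensuring that $\|\Phi'\|_{X^\star}$ stays bounded away from zero on the relevant energy strip so that $\Phi$ decreases by a definite amount along the flow. This deformation lemma is the one genuine obstacle, as it is the only place where the Palais-Smale condition and the $C^1$ regularity are truly used.

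Finally I would derive the contradiction. By the definition of $c$ as an infimum, I choose a path $\gamma_0\in\mathcal{P}$ with $\sup_{u\in\gamma_0}\Phi(u)\leq c+\varepsilon$, and set $\widetilde{\gamma}=\eta(1,\gamma_0(\cdot))$, which is again continuous. Because $\Phi(0)<\rho\leq c$ and $\Phi(e)<\rho\leq c$, the choice of $\varepsilon$ forces $\Phi(0)\leq c-\varepsilon$ and $\Phi(e)\leq c-\varepsilon$, so the endpoints $0$ and $e$ are left fixed by $\eta(1,\cdot)$; thus $\widetilde{\gamma}\in\mathcal{P}$. On the other hand, every point of $\gamma_0$ lies in $\{\Phi\leq c+\varepsilon\}$, so $\sup_{u\in\widetilde{\gamma}}\Phi(u)\leq c-\varepsilon<c$, contradicting the definition of $c$. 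Hence $c$ is a critical value, and any critical point $u_0$ with $\Phi(u_0)=c\geq\rho$ yields the stated conclusion; since $\Phi(u_0)=c>\Phi(0)$ and $\Phi(u_0)=c>\Phi(e)$, necessarily $u_0\notin\{0,e\}$.
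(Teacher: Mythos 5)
The paper does not actually prove Theorem \ref{mpt}: it is quoted as a known variant of the Ambrosetti--Rabinowitz mountain pass theorem, with the proof delegated to the cited monographs (Jabri; Krist\'aly--R\u adulescu--Varga; Pucci--R\u adulescu). So there is no in-paper argument to compare against; what you have written is the standard minimax proof, and it is essentially correct. Your two main steps are sound: the intermediate value theorem applied to $t\mapsto\|\gamma(t)\|_X$ gives $c\geq\rho$, and the contradiction via the quantitative deformation lemma (whose construction you correctly identify as the only place where the Palais--Smale condition and the $C^1$ regularity enter, through the lower bound on $\|\Phi'\|_{X^\star}$ on the energy strip $\{|\Phi-c|\leq 2\varepsilon\}$, available because $K_c=\emptyset$ under PS) is the classical one. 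Two small caveats. First, the precise form of the deformation you invoke --- mapping $\{\Phi\leq c+\varepsilon\}$ into $\{\Phi\leq c-\varepsilon\}$ while fixing all of $\{\Phi\leq c-\varepsilon\}$ --- differs by constants from the textbook statement, which typically only fixes $\{|\Phi-c|\geq 2\varepsilon\}$; this is harmless (shrink $\varepsilon$ so that $\max\{\Phi(0),\Phi(e)\}<c-2\varepsilon$), but worth stating carefully if the proof were to be written out in full. Second, the final inequality $\rho>0$ in the theorem's conclusion does not follow from your argument, nor can it: as stated the hypotheses only require $\rho\in\RR$ with $\Phi(0)<\rho$, so $\rho$ could be nonpositive. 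That is a defect of the statement as transcribed in the paper (the intended normalization is $\Phi(0)=0<\rho$), not of your proof.
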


Now we are able to prove the following.

\begin{theorem}\label{teorr2}
Let $\Omega\subset\RR^N$ ($N\geq 2$) be a bounded domain with
smooth boundary and $p$ be log-H\"{o}lder continuous with $1<p^-\leq p^+<\infty$ for all $x\in\Omega.$
Assume hypotheses (H1)-(H4) take place. Then there exists $\lambda_0>0$ such that problem \eqref{newPr1} has at least two nontrivial weak solutions in $V$ for every $\lambda>\lambda_0$.
\end{theorem}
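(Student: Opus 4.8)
The plan is to keep the first solution produced by Theorem \ref{teorr1} and to manufacture a second, distinct one by a mountain pass argument. For $\lambda>\lambda_0$ the functional $I=I_1-\lambda I_2$ is coercive and weakly lower semicontinuous (exactly as shown in the proof of Theorem \ref{teorr1}), hence it attains a global minimum at some $u_1\in V$ with $I(u_1)<0=I(0)$; this $u_1$ is a first nontrivial weak solution. To obtain a second one I would apply Theorem \ref{mpt} to $I$, taking $e:=u_1$ as the point lying beyond the mountain.

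\smallskip

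First I would install the mountain pass geometry. Combining (H4) (smallness of $f$ near the origin) with (H3) (sub-$p(\cdot)$ growth at infinity), one gets, for every $\ep>0$, a constant $C_\ep>0$ and a subcritical exponent $q\in C(\overline\Omega;\RR)$ with $p^+<q^-\le q^+<\inf_{\overline\Omega}p_2^{*}$ such that $|F(x,t)|\le\ep|t|^{p(x)}+C_\ep|t|^{q(x)}$ for a.e.\ $x\in\Omega$ and all $t\in\RR$. Since $\intom|u|^{p(x)}\,dx\le a_0^{-1}\Lambda(u)$, choosing $\ep$ small makes the coefficient of $\Lambda(u)$ positive, so that for $\|u\|_a=\tau<1$ inequality \eqref{eq:norm less}, together with the compact embedding $V\hookrightarrow L^{q(\cdot)}(\Omega)$ furnished by Theorem \ref{th emb}, yields $I(u)\ge c_1\tau^{p^+}-c_2\tau^{q^-}$ for positive constants $c_1,c_2$. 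Because $q^->p^+$, shrinking $\tau$ gives $I(u)\ge\rho>0$ whenever $\|u\|_a=\tau$, which is condition $(i)$. Condition $(ii)$ is then immediate: $I(0)=0<\rho$ and $I(u_1)<0<\rho$, and by shrinking $\tau$ further so that $\tau<\|u_1\|_a$ we also guarantee $\|e\|_a=\|u_1\|_a>\tau$.

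\smallskip

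The main obstacle is the verification of the Palais--Smale condition, which is customarily where the Ambrosetti--Rabinowitz condition is invoked to force boundedness of PS sequences. Here I would replace that device by the coercivity of $I$ already established in Theorem \ref{teorr1}: if $(I(u_n))_n$ is bounded, coercivity makes $(u_n)_n$ bounded in $V$, and the reflexivity of $V$ (Theorem \ref{t0: Banach space}) yields a subsequence with $u_n\weak u$. Writing $\langle I_1'(u_n),u_n-u\rangle=\langle I'(u_n),u_n-u\rangle+\lambda\langle I_2'(u_n),u_n-u\rangle$, the first term vanishes in the limit since $I'(u_n)\ri0$ in $V^{*}$ and $(u_n-u)_n$ is bounded, while the second vanishes because the compact embedding $V\hookrightarrow L^{q(\cdot)}(\Omega)$ and the growth bound on $f$ render $I_2'$ compact; indeed, by \eqref{Hol}, $|\intom f(x,u_n)(u_n-u)\,dx|\le2\|f(\cdot,u_n)\|_{L^{q'(\cdot)}(\Omega)}\|u_n-u\|_{L^{q(\cdot)}(\Omega)}\ri0$. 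Hence $\limsup_{n\ri\infty}\langle I_1'(u_n),u_n-u\rangle\le0$, and the $(S_+)$ property of $I_1'$ from Proposition \ref{Prop aou I_1}(iii) gives $u_n\ri u$ strongly. Thus $I$ satisfies the Palais--Smale condition for every $\lambda>0$.

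\smallskip

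With geometry and compactness in hand, Theorem \ref{mpt} provides a critical point $u_0\in V\setminus\{0,u_1\}$ with $I(u_0)\ge\rho>0$. Since $I(u_0)>0>I(u_1)=\min_V I$ and $I(0)=0$, the function $u_0$ is nontrivial and necessarily distinct from $u_1$ (they carry different critical values); being a critical point of $I$, it is a second weak solution of \eqref{newPr1}, which completes the proof.
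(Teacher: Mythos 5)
Your proposal is correct and follows essentially the same route as the paper: the first solution $u_1$ from Theorem \ref{teorr1} serves as the point $e$ beyond the mountain, coercivity of $I$ replaces the Ambrosetti--Rabinowitz condition in bounding Palais--Smale sequences, the $(S_+)$ property of $I_1'$ upgrades weak to strong convergence, and (H3)--(H4) give the growth estimate $|F(x,t)|\le\ep|t|^{p(x)}+C_\ep|t|^{q(x)}$ with an exponent above $p^+$ that yields the geometry for $\|u\|_a=\tau<\min\{1,\|u_1\|_a\}$. The only cosmetic differences are your use of a variable exponent $q(\cdot)$ where the paper fixes a constant $r\in(p^+,p_2^\star)$, and your cleaner packaging of the compactness step as $\langle I_1'(u_n),u_n-u\rangle=\langle I'(u_n),u_n-u\rangle+\lambda\langle I_2'(u_n),u_n-u\rangle$ rather than the paper's term-by-term limits.
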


\begin{proof}
By Theorem \ref{teorr1} we have already established that problem \eqref{newPr1} has at least one nontrivial weak solution $u_1\in V$ for every $\lambda>\lambda_0$, where $\lambda_0$ is the one defined by \eqref{eq:lambda0}. To deduce the existence of a second nontrivial weak solution for problem \eqref{newPr1}, we will show that $I$ satisfies the hypotheses of Theorem \ref{mpt}. We begin with the Palais-Smale condition. Let us consider a sequence  $(u_n)_n\subset V$ with the property that there exits $M>0$ such that
\begin{eqnarray}\label{marginireIL3}
|I(u_n)|\leq M,\quad \mbox{and},\quad \mbox{when} \quad n\rightarrow\infty, \, \quad I'(u_n)\rightarrow0 \quad \mbox{in} \quad V^*.
\end{eqnarray}
We recall that in the proof of Theorem \ref{teorr1} we have established the coercivity of $I$, so by \eqref{marginireIL3} we infer that $(u_n)_n$ is bounded. Moreover,  $V$ is a reflexive Banach space and a closed subspace of $W^{2, p(\cdot)}(\Omega)$, thus there exists $u_0\in V \subset W^{2,p(\cdot)}(\Omega)$ such that, passing eventually to a subsequence,
\begin{eqnarray}\label{limitaUNinW}
u_n\rightharpoonup u_0 \quad \mbox{in} \quad W^{1,\overset{\rightarrow} p(\cdot)}(\Omega).
\end{eqnarray}
Applying again Theorem \ref{th emb} we deduce that
\begin{eqnarray}\label{limitaUNinWL}
u_{n} \rightarrow u_0 \quad \mbox{in}\quad L^1(\Omega), \qquad u_{n} \rightarrow u_0 \quad \mbox{in}\quad L^{p^-}(\Omega) \qquad \mbox{and}\qquad u_{n} \rightarrow u_0 \quad \mbox{in}\quad L^{p(\cdot)}(\Omega).
\end{eqnarray}
All the above information was obtained starting from the boundedness of $(I(u_n))_n$. By exploiting the second part of relation \eqref{marginireIL3} and the weak convergence from
 \eqref{limitaUNinW}  we arrive at
\begin{eqnarray}\label{limitaDerivataIn}\nonumber
\lim\limits_{n\rightarrow\infty}\left| \langle I'(u_n), u_n-u_0\rangle\right|=0.
\end{eqnarray}
More exactly, we have
\begin{eqnarray}\label{floare}
0 &=& \lim\limits_{n\rightarrow\infty}\int_{\Omega } | \Delta u_n| ^{p(x)-2}\,\Delta u_n \,\Delta (u_n-u_0)\, dx\nonumber\\
& &+\lim\limits_{n\rightarrow\infty}\int_{\Omega} a(x)|u_n|^{p(x)-2}u_n(u_n-u_0)\,dx\\
& &-\lim\limits_{n\rightarrow\infty}\lambda\int_{\Omega} f(x,u_n)(u_n-u_0)\,dx.\nonumber
\end{eqnarray}
By (H1), \eqref{Hol}, \eqref{limitaUNinWL} and \eqref{L07}, we deduce that
\begin{equation}\label{marginireBUn}
\lim_{n\rightarrow\infty}\left|\int_{\Omega} a(x)|u_n|^{p(x)-2}u_n(u_n-u_0)\,dx \right|\leq 2 \|a\|_{L^{\infty}(\Omega)} \lim_{n\rightarrow\infty}\left(\||u_n|^{p(x)-1}\|_{L^{p'(\cdot)}(\Omega)}\|u_n-u_0\|_{L^{p(\cdot)}(\Omega)}\right)=0.
\end{equation}
On the other hand, by \eqref{marginireJ2_32},  \eqref{Hol}, \eqref{limitaUNinWL} and \eqref{L07}, there exists $c>0$ such that
\begin{eqnarray}\label{steacu4}
\lim_{n\rightarrow\infty} \left|\int_{\Omega} f(x,u_n)(u_n-u_0)dx \right|&\leq& c_0\lim_{n\ri\infty}\|u_n-u_0\|_{L^1(\Omega)}\\
& & +c\,\lim_{n\ri\infty}\left(\||u_n|^{p^{-}-1}\|_{L^{{({p^{-}})^{'}}}(\Omega)}\|u_n-u_0\|_{L^{p^{-}}(\Omega)}\right)=0.\nonumber
\end{eqnarray}
Replacing \eqref{marginireBUn} and \eqref{steacu4} in \eqref{floare} we obtain
$$\lim\limits_{n\rightarrow\infty}\int_{\Omega } | \Delta u_n| ^{p(x)-2}\,\Delta u_n \,\Delta (u_n-u_0)\, dx=0,$$
so the weak convergence  \eqref{limitaUNinW} and Proposition \ref{Prop aou I_1} imply that $u_{n}\rightarrow u_0 $ {in} $V$ as $n\rightarrow\infty$. With this, we conclude that $I$ verifies the Palais-Smale condition. Let us show now that $I$ has a mountain pass-type geometry too.

\smallskip

We can see immediately that
\begin{equation}\label{eq: MPG I1 1}
I_1(u)\geq \frac{1}{p^+}\int_{\Omega }\left[ | \Delta u| ^{p(x)}+a(x) | u| ^{p(x)}\right] \,dx \quad\mbox{for all}\quad u\in V.
\end{equation}
Passing to $I_2$, we make use once again of (H3). For $\varepsilon>0$ arbitrarily fixed, there exists $\delta_1\geq 1$ such that for all $|s|>\delta_1$ and all $x \in \Omega,$
\begin{eqnarray}\label{limitaLaInfinitF1_1}\nonumber
 |f(x,s)|\leq\varepsilon |s|^{r-1}
\end{eqnarray}
where $p^{+}<r<{p}_2^\star$.
At the same time, by (H4), there exists $\delta_2>0$ such that for all $|s|<\delta_2$ and all $x \in \Omega,$
  \begin{eqnarray}\label{limitaLa0F4_1}\nonumber
 |f(x,s)|\leq\varepsilon |s|^{p(x)-1}.
\end{eqnarray}
Putting together the previous two inequalities and the continuity of $f$ in its second argument, for a sufficiently large constant $c>0$,
\begin{equation}\label{eq: MPG I2}
 I_2(u)\leq c\,\|u\|_{L^r(\Omega)}^{r}+\frac{\varepsilon}{p^{-}}\int_{\Omega }| u| ^{p(x)}\, dx\quad\mbox{for all}\quad u\in V.
\end{equation}
Combining \eqref{eq: MPG I1 1} and \eqref{eq: MPG I2} we get
$$I(u)\leq \left(\frac{1}{p^{+}}-\frac{\lambda\,\varepsilon}{a_0\,p^{-}}\right)\int_{\Omega }\left[ | \Delta u| ^{p(x)}+a(x) | u| ^{p(x)}\right] \,dx-\lambda \,c\,\|u\|_{L^r(\Omega)}^{r}\quad\mbox{for all}\quad u\in V,$$
since $a(x)\geq a_0>0$ for all $x\in\Omega$.

We arbitrarily take $0<\tau<1$. Then, due to the above inequality, for $\|u\|_a=\tau$, relation \eqref{eq:norm less} and Theorem \ref{th emb} give us
\begin{equation}\label{eq: MPG I1}
I(u)\geq \left(\frac{1}{p^{+}}-\frac{\lambda\,\varepsilon}{a_0\,p^{-}}\right)\|u\|_a^{p^+}-\lambda \,c\,\|u\|_{a}^{r}.
\end{equation}
We choose $0<\varepsilon<a_0\,p^-/({\lambda p^+})$ and, since $p^{+}<r$, for $\tau=\|u\|_{a}<\min\{1,\|{u}_1\|_{a}\},$ we can find $\rho$ such that $I(u)\geq \rho>0=I(0)>I(u_1)$, where $u_1$ is the first nontrivial weak solution found by Theorem \ref{teorr1}. Therefore $I$ has a mountain pass-type geometry.
 \smallskip

 Now we can apply Theorem \ref{mpt} to obtain a second nontrivial weak solution ${u}_2\in V\backslash \{0,\, {u}_1\}$ to problem (\ref{newPr1}) and our proof is complete.
\end{proof}

\bigskip
\textbf{Acknowledgment.} M.M. Boureanu was supported by a grant of Romanian National Authority for Scientific Research and Innovation, CNCS-UEFISCDI, project number PN-II-RU-TE-2014-4-0657. V. R\u adulescu was supported by a grant of Romanian National Authority for Scientific Research and Innovation, CNCS-UEFISCDI, project number PN-II-PT-PCCA-2013-4-0614. D. Repov\v s was supported in part by the Slovenian Research Agency grants P1-0292-0101, J1-6721-0101, J1-7025-0101 and J1-5435-0101.

\end{document}